\documentclass[11pt]{article}
\usepackage{fullpage, amsmath, amssymb, amsfonts, amscd, amsxtra, mathrsfs, stmaryrd, amsthm, url, hyperref}
\usepackage[all]{xy}

\newtheorem{theorem}{Theorem}[section]
\newtheorem{lemma}[theorem]{Lemma}
\newtheorem{cor}[theorem]{Corollary}

\newtheorem{prop}[theorem]{Proposition}
\theoremstyle{definition}
\newtheorem{defn}[theorem]{Definition}

\numberwithin{equation}{theorem}

\def\ZZ{{\mathbb Z}}

\def\Qp{{\mathbb{Q}}_p}

\def\MM{\mathfrak{m}}
\def\D{\mathrm{D}}
\def\ho{\widehat{\otimes}}

\def\Gal{\mathrm{Gal}}
\def\rig{\mathrm{rig}}

\def\dif{\mathrm{dif}}

\def\ho{\widehat{\otimes}}

\def\bcris{\mathbf{B}_{\mathrm{crys}}}

\def\bdR{\mathbf{B}_{\rm dR}}

\def\m{(\varphi,\Gamma)}
\def\dcris{\mathrm{D}_{\mathrm{crys}}}
\def\ddR{\mathrm{D}_{\mathrm{dR}}}

\def\ra{\rightarrow}

\newcommand{\bdag}[1]{\mathbf{B}^{\dagger #1}}

\newcommand{\brig}[2]{\mathbf{B}^{\dagger #1}_{\mathrm{rig} #2}}

\title{The Eigencurve is Proper}
\author{%
Hansheng Diao\\
       Department of Mathematics\\
       Harvard University\\
       hansheng@math.harvard.edu
\and
       Ruochuan Liu\\
     Beijing International Center\\
        for Mathematical Research\\
        Peking University\\
        liuruochuan@math.pku.edu.cn}
\date{}

\begin{document}
\maketitle
\begin{abstract}
We prove in this paper that for any prime $p$ and tame level $N$, the projection from the eigencurve to the weight space satisfies a rigid analytic version of the ``valuative criterion for properness'' introduced by Buzzard and Calegari. This gives a negative answer to a question of Coleman and Mazur.
\end{abstract}


\section{Introduction}
Let $p$ be a prime number. The purpose of this paper is to answer the following question raised by Coleman and Mazur in \cite{CM98}:
\begin{quote}
Do there exist $p$-adic analytic families of overconvergent eigenforms of finite slope parameterized by a punctured disc, and converging, at the puncture, to an overconvergent eigenform of infinite slope?
\end{quote}

\noindent In [\emph{loc. cit.}], for $p>2$, Coleman and Mazur constructed a $\Qp$-rigid analytic curve over the weight space whose $\mathbb{C}_p$-points parametrize all finite slope overconvergent $p$-adic eigenforms of tame level 1. They call it the $p$-adic \emph{eigencurve} of tame level 1. Such a construction was later generalized to all primes and tame levels by Buzzard \cite{Buz07}. In this framework, as suggested by Buzzard and Calegari \cite{BuCa06}, one can formulate the question of Coleman and Mazur as whether the projection from the eigencurve to the weight space satisfies the \emph{valuative criterion for properness}. 

In the past decade, some progress has been made towards this problem. In \cite{BuCa06}, the properness was proved for $p=2$ and tame level $N=1$. In \cite{Cal08}, the properness was proved at the weights $\chi^k\psi$ where $\chi$ is the $p$-adic cyclotomic character, $k\in\mathbb{Z}$ and $\psi$ is a finite order character of conductor dividing $N$. Both of the works are based on explicit estimates for the convergence of certain overconvergent modular forms. 

In this paper, we will show that for all primes $p$ and tame levels $N$, the answer to the question of properness is yes. That is, the answer to the question of Coleman and Mazur is no. More precisely, the main result of this paper is the following theorem.

\begin{theorem}\label{main}
Let $\mathcal{C}_{p,N}$ be the Coleman-Mazur eigencurve of tame level $N$, and let $\pi:\mathcal{C}_{p,N}\ra \mathcal{W}_N$ denote the natural projection to the weight space. Let $D$ be the closed unit disk over some finite extension $L$ over $\Qp$, and let $D^{\ast}$ be the punctured disk. Suppose $h: D^{\ast}\rightarrow \mathcal{C}_{p,N}$ is a morphism of rigid analytic spaces such that $\pi\circ h$ extends to $D$.\footnote{In fact,  any rigid analytic morphism $D^\ast\ra\mathcal{W}_N$ extends uniquely to a rigid analytic morphism $D\ra\mathcal{W}_N$ by Lemma \ref{lem:extension}; hence this condition always holds.}  Then $h$ extends to a morphism $\tilde{h}: D\rightarrow \mathcal{C}_{p, N}$ compatible with $\pi\circ h$.
\end{theorem}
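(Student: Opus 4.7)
The plan is to exploit the $p$-adic Hodge-theoretic description of points on the eigencurve in terms of trianguline $(\varphi,\Gamma)$-modules, and to prove an extension theorem for families of such modules across the puncture.

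First, I would attach to $h$ a rank-two family of $(\varphi,\Gamma)$-modules $M$ over the relative Robba ring $\r_{D^*}$. This comes from the pseudocharacter carried by $\mathcal{C}_{p,N}$ together with Kisin's analytic continuation of crystalline periods of overconvergent modular forms. The finite slope hypothesis on $D^*$ provides a triangulation
\[
0 \longrightarrow \r_{D^*}(\delta_1) \longrightarrow M \longrightarrow \r_{D^*}(\delta_2) \longrightarrow 0,
\]
where $\delta_1, \delta_2 \colon \Qp^\times \to \OO(D^*)^\times$ are continuous characters; $\delta_1$ records the $U_p$-eigenvalue and $\delta_1\delta_2$ records the weight and nebentypus.

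Next I would extend the family $M$ across the origin. Since $\pi \circ h$ extends to $D$, the product $\delta_1\delta_2$ extends to a character of $\Qp^\times$ valued in $\OO(D)^\times$, so the determinant of $M$ extends. Combined with a gluing statement for families of $(\varphi,\Gamma)$-modules over punctured discs in the spirit of Kedlaya-Pottharst-Xiao, one deduces that $M$ itself extends to a family $\widetilde{M}$ over $\r_D$. One then has to propagate the triangulation across the puncture: the saturated sub-$(\varphi,\Gamma)$-module $\r_{D^*}(\delta_1) \subset M$ should extend to a saturated sub-line $\r_D(\widetilde{\delta}_1) \subset \widetilde{M}$, so that $\delta_1$ extends to a continuous character into $\OO(D)^\times$, forcing the $U_p$-eigenvalue at the puncture to be finite and nonzero.

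Finally, the extended trianguline $\widetilde{M}$ together with its triangulation parameter at the origin produces, through the eigenvariety machinery of Buzzard, a classical point on $\mathcal{C}_{p,N}$ corresponding to a finite slope overconvergent eigenform, giving the extension $\widetilde{h}\colon D \to \mathcal{C}_{p,N}$. The main obstacle is the extension of the triangulation: ruling out the $U_p$-eigenvalue going to $0$ at the puncture is precisely the infinite slope scenario Coleman and Mazur asked about. Previous partial results relied on slope estimates $v_p(a_p(t))$; the advance needed here is a structural argument inside the category of trianguline $(\varphi,\Gamma)$-modules in families, replacing such estimates by the algebraic fact that the relevant saturated image extends across the puncture.
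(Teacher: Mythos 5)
Your strategy is genuinely different from the paper's, but it has two gaps at precisely the places where the real work must happen, and both are asserted rather than proved.

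First, the claimed ``gluing statement for families of $(\varphi,\Gamma)$-modules over punctured discs in the spirit of Kedlaya--Pottharst--Xiao'' does not exist. There is no general extension theorem for families of $(\varphi,\Gamma)$-modules over $\r_{D^*}$ to $\r_D$; the determinant extending is far from sufficient. The paper sidesteps this entirely by working one level up: it extends the \emph{pseudo-representation} across the puncture (Prop.~\ref{prop:extension}) using the concrete fact that the deformation ring $R_{\overline{V}}$ maps into functions bounded by $1$, hence into $\Zp\langle T\rangle$ (Lemma~\ref{lem:extension}), and only then converts to a family of representations $V_D$ via \cite[Thm.~5.1.2]{CM98}. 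If you want to extend the $(\varphi,\Gamma)$-module, you should first extend the global representation this way and then apply the functor; framing it as an abstract extension theorem for local modules is not a route that is known to work.

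Second, and more seriously, the statement that the saturated sub-line $\r_{D^*}(\delta_1)\subset M$ ``should extend to a saturated sub-line $\r_D(\widetilde\delta_1)\subset\widetilde M$'' is not an intermediate step: it is logically equivalent to the theorem you are trying to prove. Showing that $\delta_1$ extends to $\OO(D)^\times$ is exactly showing that the $U_p$-eigenvalue $\alpha$ has a nonzero value at the origin, i.e.\ ruling out the infinite-slope degeneration. You name this as ``the main obstacle'' and then appeal to ``a structural argument inside the category of trianguline $(\varphi,\Gamma)$-modules'' without supplying one. The paper's argument supplies exactly this missing piece, but by a different mechanism: Liu's finite-slope-subspace theory gives $\dcris^+(V_R^*)^{\varphi=\alpha}=\ddR^+(V_R^*)$ of rank~$1$ over any affinoid $M(R)\subset D^*$ (Cor.~\ref{cor:comparison}); the intersection Lemma~\ref{keylemma} inside $\bdR^+[[T,T^{-1}]]$ upgrades this to $\dcris^+(V_D^*)=\ddR^+(V_D^*)$ over all of $D$; Bellovin's flat base change shows $\ddR^+(V_D^*)\neq 0$; then a nonzero de Rham (hence crystalline) vector $e$ with nonzero specialization $e_0$ at the origin, satisfying $\varphi(e)=\alpha e$, forces $\alpha(0)\neq 0$ by injectivity of $\varphi$ on $\dcris^+(V_0^*)$. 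That is the content your ``structural argument'' would have to reproduce, and the trianguline picture alone does not obviously give it to you without something playing the role of the crystalline-versus-de Rham comparison across the puncture.
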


We have to point out that although this property is named ``properness of the eigencurve'', the projection $\pi$ is actually not proper in the sense of rigid analytic geometry because it is of infinite degree.

In the rest of the introduction we will sketch the steps to prove Theorem \ref{main} and the structure of the paper. As will be clear to the reader, our method is totally different from the one of \cite{BuCa06} and \cite{Cal08}. In fact, by \cite{CM98} (for $p>2$ and $N=1$) and \cite{KL03} (for general $p$ and $N$), there exists a family of $p$-adic representations $V_{\tilde{\mathcal{C}}}$ of $G_{\mathbb{Q}}$ over the normalization $\tilde{\mathcal{C}}_{p,N}$ of $\mathcal{C}_{p,N}$ interpolating (the semi-simplifications of) the Galois representations associated to classical eigenforms. Granting this global input, our approach to Theorem \ref{main} is purely local and Galois theoretical. We make extensive use of the recent advances \cite{BeCo08}, \cite{KL10}, \cite{Bel13}, \cite{KPX}, and especially  \cite{Liu12} in $p$-adic Hodge theory in rigid analytic families. In \S3, we will give a brief review on the family version of the functors $\ddR^+$, $\dcris^+$, $\D^{\dagger}_{\rig}$ and $\D^+_{\dif}$ for the sake of the reader. 

By its construction, one may regard the eigencurve $\mathcal{C}_{p,N}$ as an analytic subspace of 
\[
X_p\times \mathbb{G}_m\times\prod_{\text{prime}\hspace{0.5mm}q |N} \mathbb{A}^1
\] 
where $X_p$ is the deformation space of the pseudo-representations associated to all isomorphism classes of $p$-modular, tame level $N$, residue representations of $G_{\mathbb{Q}}$. In \S2, we will show that the composition 
\[
D^\ast\ra\mathcal{C}_{p,N}\hookrightarrow X_p\times \mathbb{G}_m\times\prod_{\text{prime}\hspace{0.5mm}q |N} \mathbb{A}^1
\ra X_p
\] 
extends to a morphism of rigid analytic spaces on $D$. As a consequence, we obtain a family of pseudo-representations on $D$ by pulling back the universal pseudo-representation over $X_p$. Applying a result of \cite{CM98}, one can convert it to a family of $p$-adic representations $V_D$ over $D$. 

The difficult part is to show that the composition 
\[
D^\ast\ra\mathcal{C}_{p,N}\hookrightarrow X_p\times \mathbb{G}_m\times\prod_{\text{prime}\hspace{0.5mm}q |N} \mathbb{A}^1
\ra \mathbb{G}_m
\] 
extends to a morphism on $D$. This amounts to showing that the pullback of the $U_p$-eigenvalue, which is denoted by $\alpha$, is nonzero at the puncture. We achieve this by showing that the specialization of $V^\ast_D$, the dual of $V_D$, at the puncture has a nonzero crystalline period with Frobenius eigenvalue $\alpha(0)$. Noticing that the crystalline Frobenius is always injective, we conclude that $\alpha(0)$ is nonzero.

To this end, we compare the positive crystalline and de Rham periods of $V_D^\ast$. Recall that it was proved by Kisin that the positive crystalline and de Rham periods of $V_{\tilde{\mathcal{C}}}^\ast$ coincide on ``$Y$-small" affinoid subdomains of $\tilde{\mathcal{C}}_{p,N}$ \cite{Kis03}. This property was later strengthened to all affinoid subdomains by the work of the second author \cite{Liu12}. Using the results of \cite{Liu12}, we show in \S4 that positive crystalline and de Rham periods also coincide on the punctured disk. More precisely, for any affinoid subdomain $M(R)$ of $D^\ast$, we have that $\dcris^+(V_{R}^\ast)^{\varphi=\alpha}$ is a locally free $R$-module of rank 1 and $\dcris^+(V_{R}^\ast)^{\varphi=\alpha}=\ddR^+(V_{R}^\ast)$, where $V_R$ denotes the restriction of $V_{D}$ on $M(R)$. A crucial observation is that this property forces $\dcris^+(V_{D}^\ast)=\ddR^+(V_{D}^\ast)$; the details are presented in \S5. 

In \S6, we first apply the flat base change property of de Rham periods \cite{Bel13} to show that $\ddR^+(V^\ast_D)$ is nonzero. This ensures that the specialization of $\ddR^+(V^\ast_D)$ at the puncture gives rise to the desired crystalline periods. Finally, it is easy to deduce from $\alpha(0)\neq 0$ that the compositions
\[
D^\ast\ra\mathcal{C}_{p,N}\hookrightarrow X_p\times \mathbb{G}_m\times\prod_{\text{prime}\hspace{0.5mm}q |N} \mathbb{A}^1
\ra \mathbb{A}^1
\] 
for all prime factors $q$ of $N$ extend to $D$, concluding the proof of Theorem \ref{main}.

\subsection*{Notations}
Let $p$ be a prime number. We fix a positive integer $N$, which is coprime to $p$, to be the tame level. Choose a compatible system of primitive $p$-power roots of unity $(\zeta_{p^n})_{n\geq 0}$. Namely, each $\zeta_{p^n}$ is a primitive $p^n$-th root of unity and $\zeta_{p^{n+1}}^p=\zeta_{p^n}$. Let $\Qp(\zeta_{p^{\infty}})=\bigcup_{n\geq 1}\Qp(\zeta_{p^n})$. Let $G_{\Qp}=\Gal(\overline{\mathbb{Q}}_p/\Qp)$ and $\Gamma=\Gamma_{\Qp}=\Gal(\Qp(\zeta_{p^{\infty}})/\Qp)$.
Let $\Sigma$ be the finite set of places of $\mathbb{Q}$ consisting of the infinite place and the places dividing $pN$, and let $G_{\mathbb{Q},\Sigma}$ be the absolute Galois group of the maximal extension of $\mathbb{Q}$ which is unramified outside the places of $\Sigma$. 

For a topological group $G$ and a rigid analytic space $X$ over $\Qp$, by a \emph{family of $p$-adic representations} of $G$ of dimension $d$ on $X$ we mean a locally free coherent $\mathcal{O}_X$-module $V_X$ of rank $d$ equipped with a continuous $\mathcal{O}_X$-linear $G$-action, and we denote its dual by $V_X^\ast$. When $X=M(S)$ is an affnioid space over $\Qp$, we also call a family of $p$-adic representations of $G$ on $X$ an \textit{$S$-linear representation} of $G$. If $M(R)\subset M(S)$ is an affinoid subdomain and $V_S$ is a family of representation on $M(S)$, we write $V_R$ for the base change $V_S\otimes_SR$. Finally, for every $x\in M(S)$, we write $V_x$ to denote the specialization $V_S\otimes_S k(x)$.


\subsection*{Acknowledgements}
The authors would like to thank Rebecca Bellovin, Kevin Buzzard, Payman Kassaei, Kiran Kedlaya, Mark Kisin, and Liang Xiao for useful comments on earlier drafts of this paper. 

\section{The eigencurve $\mathcal{C}_{p,N}$}

For a classical eigenform $f$, let $\rho_f$ denote the two dimensional $p$-adic representation of $G_{\mathbb{Q}}$ associated to $f$. Here, we follow the convention so that the trace of the \emph{arithmetic Frobenius} $\mathrm{Frob}_l$ equals the $T_l$-eigenvalue of $f$, for each $l\nmid pN$. If $f$ is of level $\Gamma_1(Np^m)$ for some $m\geq 1$, let $a_q(f)$ denote its $U_q$-eigenvalue for any prime divisor $q$ of $pN$.  Recall that by a \emph{$p$-modular residual representation of tame level $N$}, we mean a two dimensional $G_{\mathbb{Q}}$-representation $\overline{V}$ over a finite field of characteristic $p$, which is isomorphic to the mod $p$ reduction $\overline{\rho}_f$ of $\rho_f$ for some classical eigenform $f$ of level $\Gamma_1(Np^m)$ with $m\geq1$. Notice that $\overline{\rho}_f$ is well-defined up to semi-simplification. Since the modular curve $Y_1(Np^m)$ has good reductions at the places outside $\Sigma$, we have that the $G_{\mathbb{Q}}$-action on $\rho_f$ factors through $G_{\mathbb{Q},\Sigma}$. Hence the $G_{\mathbb{Q}}$-action on $\overline{V}$ factors through $G_{\mathbb{Q},\Sigma}$ as well. Let $R_{\overline{V}}$ be the universal deformation ring of the $G_{\mathbb{Q},\Sigma}$-pseudo-representation associated to $\overline{V}$. It is a quotient of a power series ring in finitely many variables over $\mathbb{Z}_p$. So one can attach a rigid analytic space $X_{\overline{V}}$ to $R_{\overline{V}}[1/p]$ following the construction of Berthelot (cf. \cite[\S7]{dJ95}). 

For a classical eigenform $f$ of level $\Gamma_1(Np^m)$ for some $m\geq1$, if $\overline{\rho}_f$ is isomorphic to $\overline{V}$ up to semi-simplification, then $\rho_f$ naturally gives rise to a point of $X_{\overline{V}}(E)$, where $E$ is the field of coefficients of $\rho_f$. Moreover, if the $U_p$-eigenvalue $a_p(f)$ is nonzero, we may attach to $f$ a point 
\[
x_f=(\rho_f, a_p(f)^{-1}, \prod_{q | N} a_q(f))
\]
of $(X_{\overline{V}}\times \mathbb{G}_m\times\prod_{q | N} \mathbb{A}^1)(E)$; we call $x_f$ a \emph{modular} point. 

Let $X_p=\coprod X_{\overline{V}}$ where $\overline{V}$ runs through all isomorphism classes of $p$-modular tame level $N$ residual representations\footnote{There should be only finitely many such $\overline{V}$. For $p>2$,  this is \cite[Proposition 5.1.1]{CM98}. The case $p=2$ should hold by adapting the argument of \cite[Proposition 5.1.1]{CM98}. However, we can not find a written reference for this result at this moment.}  It turns out  that these modular points can be interpolated to a rigid analytic curve, the so called eigencurve, $\mathcal{C}_{p,N}\subset X_{p}\times \mathbb{G}_m\times\prod_{q | N} \mathbb{A}^1$, and that the points of $\mathcal{C}_{p,N}$ correspond bijectively to tame level $N$ normalized overconvergent eigenforms which are of finite slope. The case of $p>2$ and $N=1$ is established in \cite{CM98}. The general case follows from a similar argument  which is carried out in \cite{KL03}\footnote{The results in \cite{KL03} assumes $g>1$. But the arguments work equally well in the case $g=1$.}.

We have a rank 2 pseudo-representation of $G_{\mathbb{Q},\Sigma}$ on the eigencurve $\mathcal{C}_{p,N}$ by pulling back the universal pseudo-representations on $X_p$. Let $\mathcal{\tilde{C}}_{p,N}$ be the normalization of $\mathcal{C}_{p,N}$. It follows from \cite[Theorem 5.1.2]{CM98} that any rank 2 pseudo-representation of $G_{\mathbb{Q},\Sigma}$ on a smooth rigid analytic curve over $\Qp$ can be naturally converted to a family of $p$-adic representations of $G_{\mathbb{Q},\Sigma}$. Thus there exists a family of $G_{\mathbb{Q},\Sigma}$-representations $V_{\mathcal{\tilde{C}}}$ of dimension $2$ on $\mathcal{\tilde{C}}_{p,N}$ whose associated pseudo-representation is isomorphic to the pullback of the rank 2 pseudo-representation on $\mathcal{C}_{p,N}$. 

The main result of this section is that the composition
\[
u: D^\ast\stackrel{h}\to\mathcal{C}_{p,N}\ra X_p\times \mathbb{G}_m\times\prod_{q |N} \mathbb{A}^1
\ra X_p
\] 
extends to a morphism on $D$. Before proceeding, we first make the following observation.
\begin{lemma}\label{lem:extension}
Let $F\in\mathcal{O}(D^\ast)$. If $|F(x)|$ is bounded above for all $x\in D^\ast$, then $F$ extends uniquely to an element of $\mathcal{O}(D)$.
\end{lemma}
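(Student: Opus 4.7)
The plan is to exploit the Laurent series expansion of functions on the punctured disk and run the non-archimedean analogue of the Riemann removable-singularity argument: the boundedness hypothesis will kill the principal (negative-index) part of $F$, leaving a power series convergent on all of $D$.

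First, I would describe $D^{\ast}$ concretely as the admissible union of closed annuli $A_r=\{x\in D:r\leq|x|\leq 1\}$ for $r$ ranging over a cofinal sequence in $(0,1)$ (say $r=|p|^n$). Each $A_r$ is affinoid, with ring of rigid functions
\[
\mathcal{O}(A_r)=\Bigl\{\sum_{n\in\ZZ}a_n T^n : |a_n|r^n\to 0 \text{ as } n\to -\infty,\ |a_n|\to 0 \text{ as } n\to +\infty\Bigr\},
\]
and passing to the inverse limit identifies $\mathcal{O}(D^{\ast})$ with the ring of Laurent series converging on every such annulus. Thus $F$ admits a unique Laurent expansion $F(T)=\sum_{n\in\ZZ}a_nT^n$.

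Next, I would invoke the non-archimedean Gauss norm identity: for any $r\in(0,1]$ the supremum of $|F|$ on the circle $|T|=r$ equals $\|F\|_r=\sup_n|a_n|r^n$. By the boundedness hypothesis there is a constant $C$ with $\|F\|_r\leq C$ for all $r\in(0,1]$. For fixed $n<0$ this forces $|a_n|\leq C r^{-n}$; letting $r\to 0^+$ (with $-n>0$) gives $a_n=0$. Hence $F(T)=\sum_{n\geq 0}a_nT^n$. Taking $r=1$ yields $|a_n|\leq C$, and the already-established convergence of $F$ on the annulus $\{|p|\leq|T|\leq 1\}$ (or any $A_r$) forces $|a_n|\to 0$ as $n\to\infty$. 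Therefore $F\in L\langle T\rangle=\mathcal{O}(D)$, giving the desired extension. Uniqueness is automatic since $D^\ast$ is Zariski dense in the connected rigid space $D$, so the restriction map $\mathcal{O}(D)\to\mathcal{O}(D^\ast)$ is injective.

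There is no real obstacle here: the only subtlety is to nail down the Laurent series description of $\mathcal{O}(D^\ast)$ as a projective limit of the affinoid annulus algebras, after which the argument is a one-line application of the maximum principle. One could equivalently cite this as a standard fact about rigid analytic spaces (the removable singularity theorem for bounded functions on a punctured disk), but the computation with Laurent coefficients is direct enough to include.
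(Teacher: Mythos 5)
Your proof is correct and is essentially the same as the paper's: both expand $F$ as a Laurent series on $D^\ast$ and use the boundedness hypothesis to annihilate the principal (negative-index) part. The paper packages the coefficient estimate as the lattice intersection $\bigcap_{n\geq 1}\mathbb{Z}_p\langle T,p^nT^{-1}\rangle=\mathbb{Z}_p\langle T\rangle$ after rescaling to $|F|\leq 1$, whereas you spell out the same Gauss-norm bound coefficient by coefficient and let $r\to 0$; these are two phrasings of one argument.
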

\begin{proof}
The uniqueness is obvious. After rescaling, we may suppose $|F(x)|\leq 1$ for any $x\in D^\ast$. Recall that $D$ is defined over a finite extension $L$ over $\Qp$. Let $M(L\langle T, p^nT^{-1}\rangle)\subset D^\ast$ be the closed annulus with outside radius 1 and inside radius $p^{-n}$. According to the assumption, $|F|\leq 1$ on $M(L\langle T, p^nT^{-1}\rangle)$ for all $n\geq 1$. This implies $F\in \mathcal{O}_L\langle T, p^{n}T^{-1}\rangle$ for all $n\geq1$. Hence
\[
F\in\bigcap_{n\geq1}\mathcal{O}_L\langle T, p^{n}T^{-1}\rangle=\mathcal{O}_L\langle T \rangle,
\]
yielding $F\in\mathcal{O}(D)$. \end{proof}

\begin{prop}\label{prop:extension}
The morphisms $u$ extends to a morphism of rigid analytic spaces $\tilde{u}:D\ra X_p$.
\end{prop}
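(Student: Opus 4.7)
The plan is to convert the problem into one about pseudo-characters and then invoke the universal property of $R_{\overline V}$. Since $D^\ast$ is connected, its image under $u$ lies in a single connected component of the disjoint union $X_p=\coprod_{\overline V'} X_{\overline V'}$, which we call $X_{\overline V}$. Pulling back the universal rank-$2$ pseudo-character $r_{\overline V}\colon G_{\mathbb{Q},\Sigma}\to R_{\overline V}$ along $u$ yields a continuous pseudo-character $T_{D^\ast}\colon G_{\mathbb{Q},\Sigma}\to \mathcal{O}(D^\ast)$. The task reduces to extending $T_{D^\ast}$ to a continuous pseudo-character $T_D$ on all of $D$ of the same residual type, and then appealing to the universal property of $R_{\overline V}$.

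The key preliminary step is a uniform pointwise bound: for each $g\in G_{\mathbb{Q},\Sigma}$, $|T_{D^\ast}(g)(x)|\leq 1$ for every $x\in D^\ast$. At any such closed point $x$, the value $T_{D^\ast}(g)(x)$ equals the trace at $g$ of the associated $2$-dimensional semisimple $G_{\mathbb{Q},\Sigma}$-representation over $k(x)$; since $G_{\mathbb{Q},\Sigma}$ is compact, such a representation preserves an $\OO_{k(x)}$-lattice, forcing the trace to be integral. Applying Lemma \ref{lem:extension} to $T_{D^\ast}(g)$ for each $g$ independently then produces an extension $T_D(g)\in \mathcal{O}(D)$ with sup-norm at most $1$.

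I then check that $T_D\colon G_{\mathbb{Q},\Sigma}\to \mathcal{O}(D)$ is in fact a continuous $2$-dimensional pseudo-character of residual type $\overline V$. The defining polynomial identities hold for $T_{D^\ast}$ on $D^\ast$; since $D^\ast$ is Zariski-dense in $D$, both sides of each identity, viewed as elements of $\mathcal{O}(D)$, must therefore agree on $D$. Continuity of $T_D$ in the Banach topology of $\mathcal{O}(D)$ follows from continuity of $T_{D^\ast}$ together with the equality $|T_D(g)-T_D(h)|_{\sup, D}=|T_{D^\ast}(g)-T_{D^\ast}(h)|_{\sup, D^\ast}$, which is built into the proof of Lemma \ref{lem:extension}. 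Residual compatibility is then automatic: since each $T_D(g)\in \mathcal{O}(D)^\circ=\OO_L\langle z\rangle$, its reduction modulo $\MM_L$ is a locally constant function $D\to k_L$ which agrees with $\tr\overline V(g)$ on the dense subset $D^\ast$, hence everywhere on $D$.

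Finally, by the universal property of $R_{\overline V}$ in Chenevier's theory of pseudo-characters, the continuous pseudo-character $T_D$ of residual type $\overline V$ corresponds to a continuous local $\Zp$-algebra homomorphism $R_{\overline V}\to \mathcal{O}(D)^\circ$, which in turn defines a morphism $\tilde u\colon D\to X_{\overline V}\hookrightarrow X_p$ of rigid analytic spaces. By construction $\tilde u$ restricts to $u$ on $D^\ast$, completing the extension. The main technical hurdle is the pointwise integrality of traces in the second paragraph; the remaining verifications are formal consequences of analytic continuation and the universal property.
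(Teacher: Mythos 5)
Your approach is genuinely different from the paper's, and while it can probably be made to work, it is more roundabout and has a gap at the final step.

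The paper's proof does not go through pseudo-characters at all. It simply observes that $X_{\overline V}$ is the rigid generic fibre of $\mathrm{Spf}(R_{\overline V})$, so \emph{every} element $t\in R_{\overline V}$ (not just the traces $r_{\overline V}(g)$) is power-bounded by $1$ on $X_{\overline V}$; hence $|u^\ast(t)(x)|\leq 1$ for all $x\in D^\ast$ and Lemma \ref{lem:extension} extends $u^\ast(t)$ to $\Zp\langle T\rangle$ directly. Since the extension map on bounded functions is a ring homomorphism (by uniqueness), this gives the ring map $R_{\overline V}\to\Zp\langle T\rangle$ with no need to verify that anything is a pseudo-character. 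Your integrality-of-traces argument is thus a more laborious way of proving a boundedness statement that is automatic for \emph{all} of $R_{\overline V}$ from the formal scheme structure.

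The gap is in your last paragraph. The universal property of $R_{\overline V}$ in Chenevier's theory is a prorepresentability statement on the category of complete Noetherian \emph{local} $\Zp$-algebras with the prescribed residue field; $\mathcal{O}(D)^\circ=\OO_L\langle T\rangle$ is not local, so ``a continuous local $\Zp$-algebra homomorphism $R_{\overline V}\to\mathcal{O}(D)^\circ$'' does not make literal sense, and the usual universal property does not directly produce such a map from an $\OO_L\langle T\rangle$-valued pseudo-character. One would instead need the functor-of-points description of the rigid space $X_{\overline V}$ on affinoid algebras (Berthelot/de Jong generic fibres together with a representability result for pseudo-characters valued in $A^\circ$), which is a real additional input that you have not cited or proved. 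Given that you already have the ring map $R_{\overline V}\to\mathcal{O}(D^\ast)^\circ$ by pulling back, it is much simpler to extend that map directly as the paper does, rather than extract the pseudo-character, extend it, and try to reconstitute the ring map. (A minor point: the reduction of an element of $\OO_L\langle T\rangle$ mod $\MM_L$ lands in $k_L[T]$, not in locally constant functions; your density argument still shows it is constant, but the phrasing is off.)
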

\begin{proof}
Since $D^\ast$ is connected, it maps to $X_{\overline{V}}$ for some $\overline{V}$. Recall that $X_{\overline{V}}$ is the rigid analytic space associated to $R_{\overline{V}}[1/p]$.  By the construction of $X_{\overline{V}}$, it follows that $|t(x)|\leq 1$ for any $x\in X_{\overline{V}}$ and $t\in R_{\overline{V}}$.  Thus for any $y\in D^\ast$, $|u^\ast(t)(y)|=|t(u(y))|\leq 1$. By Lemma \ref{lem:extension}, $u^\ast(t)$ extends to an element of $\mathcal{O}_L\langle T\rangle$. We therefore obtain a continuous morphism $u^\ast: R_{\overline{V}}\ra \mathcal{O}_L\langle T\rangle$. The induced morphism on their rigid generic fibers gives rise to the desired extension $\tilde{u}$.
\end{proof}

We may assume that the given map $h$ in Theorem \ref{main} is dominant, otherwise the situation would become trivial.  Since $D^\ast$ is smooth, it follows that $h:D^\ast\ra \mathcal{C}_{p,N}$  factors through $\mathcal{\tilde{C}}_{p,N}$. By abuse of notation we still denote the resulting map $D^\ast\ra \mathcal{\tilde{C}}_{p,N}$ by $h$ and the composition $\mathcal{\tilde{C}}_{p,N}\ra\mathcal{C}_{p,N}\ra \mathcal{W}_N$ by $\pi$. Let $V_{D^\ast}$ denote the pullback of $V_{\mathcal{\tilde{C}}}$ via $h$. We denote by $r_{D^\ast}$ the pseudo-representation associated to $V_{D^\ast}$. It is clear that $r_{D^\ast}$ is isomorphic to the pullback of the universal pseudo-representation on $X_p$ along $u$. 
\begin{cor}\label{cor:extension}
The family of $p$-adic representations $V_{D^\ast}$ extends to $D$.
\end{cor}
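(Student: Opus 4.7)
The strategy is to assemble the extension in three moves: extend the pseudo-representation, convert back to a genuine representation on the smooth curve $D$, then identify the restriction with $V_{D^\ast}$.

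First, by Proposition \ref{prop:extension} the map $u:D^\ast\to X_p$ extends to $\tilde{u}:D\to X_p$. Pulling back the universal pseudo-representation $r_{\overline{V}}$ on the relevant component $X_{\overline{V}}$ along $\tilde{u}$ produces a rank $2$ continuous pseudo-representation $r_D$ of $G_{\mathbb{Q},\Sigma}$ on $D$ whose restriction to $D^\ast$ agrees with $r_{D^\ast}$ (by construction of the latter as the pullback along $u=\tilde{u}|_{D^\ast}$).

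Second, because $D$ is a smooth rigid analytic curve over $\Qp$, I can apply \cite[Theorem 5.1.2]{CM98} (the same conversion result already used to produce $V_{\widetilde{\mathcal{C}}}$ from the pseudo-representation on $\widetilde{\mathcal{C}}_{p,N}$) to $r_D$. This yields a locally free rank $2$ coherent $\mathcal{O}_D$-module $V_D$ with a continuous $\mathcal{O}_D$-linear $G_{\mathbb{Q},\Sigma}$-action whose associated pseudo-representation is $r_D$.

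Third, I need to identify $V_D|_{D^\ast}$ with $V_{D^\ast}$. Since $h$ is dominant and $D^\ast$ is one-dimensional, the image is Zariski dense in an irreducible component of $\widetilde{\mathcal{C}}_{p,N}$; in particular, at modular points in the image the associated Galois representation is absolutely irreducible, hence the family $V_{D^\ast}=h^\ast V_{\widetilde{\mathcal{C}}}$ is generically absolutely irreducible. The conversion in \cite[Theorem 5.1.2]{CM98} is functorial and unique up to isomorphism on smooth curves in the generically absolutely irreducible case (two such representations with the same trace must coincide, as one can argue over the fraction field of each connected component by Carayol--Serre and then spread out using that locally free sheaves on a smooth curve are determined by their stalks). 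Applying this uniqueness to both $V_D|_{D^\ast}$ and $V_{D^\ast}$, which have equal associated pseudo-representations $r_{D^\ast}$, gives $V_D|_{D^\ast}\cong V_{D^\ast}$, so $V_D$ is the desired extension.

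The only delicate point is the last identification: one has to be sure that the conversion from pseudo-representations to representations is canonical enough on a smooth curve to guarantee compatibility of the two independent applications of \cite[Theorem 5.1.2]{CM98}. This is harmless here because the existence of dense modular points where the representation is absolutely irreducible forces the generic fiber of $V_D|_{D^\ast}$ to be absolutely irreducible, and over an absolutely irreducible locus a representation is determined up to isomorphism by its pseudo-representation.
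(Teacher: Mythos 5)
Your approach matches the paper's: apply Proposition~\ref{prop:extension} to get $\tilde{u}$, pull back the universal pseudo-representation along $\tilde{u}$ to obtain $r_D$ extending $r_{D^\ast}$, and then invoke \cite[Theorem 5.1.2]{CM98} (valid since $D$ is a smooth rigid curve over $\Qp$) to convert $r_D$ into a rank $2$ family $V_D$. That is the entirety of the paper's proof, which simply declares the resulting $V_D$ to be ``the desired extension.''

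Your third step --- checking that $V_D|_{D^\ast}$ is actually isomorphic to $V_{D^\ast}$ --- goes beyond what the paper writes, and it is a legitimate concern: $V_{D^\ast}$ was produced by pulling back the output of the conversion theorem applied over $\widetilde{\mathcal{C}}_{p,N}$, while $V_D$ comes from an independent application over $D$, and the conversion from pseudo-representations to representations is not unique in general. However, the justification you give has a gap. You claim that the Galois representations at modular points in the image of $h$ are absolutely irreducible, so the family is generically absolutely irreducible and hence determined by its trace. This fails if $h$ maps $D^\ast$ dominantly into an Eisenstein component of $\mathcal{C}_{p,N}$: there the classical representations are direct sums of two characters, and over such a locus the trace does not distinguish a non-split extension of the characters from the direct sum. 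So the uniqueness argument does not go through as stated. Either you must handle the reducible (Eisenstein) case separately --- for instance by appealing to the precise form of the conversion in \cite{CM98} and the structure of the Eisenstein family --- or you should observe, as the paper's terse proof implicitly does, that what the later sections actually use is only that $V_D$ is \emph{some} family on $D$ with associated pseudo-representation $r_D$, in which case the exact identification with $V_{D^\ast}$ can be dispensed with (and the loose phrasing ``$V_{D^\ast}$ extends'' read accordingly).
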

\begin{proof}
Pulling back the universal pseudo-representation on $X_p$ along $\tilde{u}$, which is given by Proposition \ref{prop:extension}, we obtain a pseudo-representation $r_D$ of $G_{\mathbb{Q},\Sigma}$ on $D$ which extends $r_{D^\ast}$. Since $D$ is a smooth rigid analytic curve over $\Qp$, by \cite[Theorem 5.1.2]{CM98}, one can convert $r_D$ to a family of $p$-adic representations on $D$, yielding the desired extension.
\end{proof}
In the rest of the paper, we denote by $V_D$ the extended family of $p$-adic representations of $G_{\mathbb{Q},\Sigma}$ on $D$ given by Corollary \ref{cor:extension}. 
\section{Families of $p$-adic representations}
In this section, we give a brief review on various $p$-adic Hodge theoretic functors for families of $p$-adic representations of $G_{\Qp}$. We refer the reader to \cite{BeCo08} and \cite{KL10} for more details.


\subsection{The modules $\ddR^+(V_S)$ and $\dcris^+(V_S)$}
 Let $\bdR^+$ and $\bcris^+$ be the de Rham and crystalline period rings used in $p$-adic Hodge theory (cf. \cite{Fo82} for more details). For each $k> 0$, $\bdR^+/(t^k)$ is naturally a $\Qp$-Banach space. This gives a Fr\'echet topology on
\[
\bdR^+=\varprojlim_k \bdR^+/(t^k).
\]
Thus, for any $\Qp$-affinoid algebra $S$, it makes sense to define $S\ho_{\Qp}\bdR^+=\varprojlim S\ho_{\Qp} \bdR^+/(t^k)$.
On the other hand, the definition of $S\ho_{\Qp}\bcris^+$ is the natural one as $\bcris^+$ has a $\Qp$-Banach space structure. Moreover, for an $S$-linear representation $V_S$ of $G_{\Qp}$, we set
\[\ddR^+(V_S)=((S\ho_{\Qp}\bdR^+)\otimes_S V_S)^{G_{\Qp}},\] and \[\dcris^+(V_S)=((S\ho_{\Qp}\bcris^+)\otimes_S V_S)^{G_{\Qp}}.\]



The following proposition, which is due to Bellovin \cite{Bel13}, ensures the flat base change property of the functor $\ddR^+$. \begin{prop}\label{basechange}
If $f:S\rightarrow S'$ is a flat morphism of $\Qp$-affinoid algebras, then
\[
\ddR^+(V_S)\otimes_S S'\xrightarrow[]{\sim} \ddR^+(V_S\otimes_S S').
\]
\end{prop}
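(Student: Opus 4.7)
The plan is to reduce the de Rham base change to a Sen-theoretic statement via the $t$-adic filtration on $\bdR^+$, and then pass to the inverse limit.

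First, I would exploit the short exact sequences
\[
0\ra \Cp(k-1)\ra \bdR^+/(t^k)\ra \bdR^+/(t^{k-1})\ra 0
\]
of $G_{\Qp}$-Banach representations. Since $\Qp$ is a field, $S\ho_{\Qp}-$ is exact on Banach spaces; since $V_S$ is finite projective over $S$, the sequence remains exact after applying $(S\ho_{\Qp}-)\otimes_S V_S$. Writing $\ddR^{[k]}(V_S):=H^0(G_{\Qp},(S\ho_{\Qp}\bdR^+/(t^k))\otimes_S V_S)$, the resulting long exact sequences in continuous $G_{\Qp}$-cohomology connect the $\ddR^{[k]}(V_S)$'s via boundary maps landing in $H^1(G_{\Qp},(S\ho_{\Qp}\Cp(k-1))\otimes_S V_S)$. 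An induction on $k$ combined with the five lemma then reduces the desired isomorphism $\ddR^{[k]}(V_S)\otimes_S S'\xrightarrow{\sim}\ddR^{[k]}(V_S\otimes_S S')$ to the analogous base change statement for $H^i(G_{\Qp},(S\ho_{\Qp}\Cp(j))\otimes_S V_S)$ with $i\in\{0,1\}$ and $j\in\ZZ$.

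The $\Cp$-coefficient base change is Sen-theoretic. By the family Sen theory of \cite{BeCo08} and \cite{KL10}, these cohomology groups are realized as the kernel and cokernel of the Sen operator $\nabla_{\Sen}-j$ acting on a finite projective $S$-module $\D_{\Sen}(V_S)$ (with appropriate twist). Since the functor $\D_{\Sen}$ commutes with flat base change of affinoid algebras and $\nabla_{\Sen}$ is $S$-linear up to the derivation on $S$, the kernels and cokernels of $\nabla_{\Sen}-j$ also commute with flat base change, yielding the required isomorphism.

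The last step is to pass to the inverse limit over $k$. This is the main obstacle, as tensor product does not commute with arbitrary inverse limits. One would have to check that the system $\{\ddR^{[k]}(V_S)\}$ satisfies Mittag-Leffler together with a uniform finiteness bound in $k$; equivalently, that only finitely many Tate twists $\Cp(j)$ contribute nontrivially over each connected component of $\Sp S$, which reflects the local constancy of the generalized Hodge-Tate weights of $V_S$. With such finiteness in hand the inverse system is essentially constant in $k$ (after possibly shrinking $S$), and the isomorphism of limits follows from the isomorphisms at each finite stage, completing the proof.
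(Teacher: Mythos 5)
The paper gives no self-contained proof here; it simply refers to Bellovin's Proposition~4.3.7. Your dévissage argument is essentially the expected line of attack and captures the right ideas (filter $\bdR^+$ by powers of $t$, reduce to family Sen theory for $\Cp(j)$-coefficients, then stabilize to pass to the limit), so the comparison is really to Bellovin's write-up rather than to anything printed in this paper. A few points, though, need tightening.

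First, the phrase ``$\nabla_\Sen$ is $S$-linear up to the derivation on $S$'' is a misstep: the Sen operator is the infinitesimal generator of the $\Gamma$-action, and $\Gamma$ acts $S$-linearly, so $\nabla_\Sen$ is genuinely $S$-linear. The base change statement you actually need is that the family Sen module $\D_\Sen(V_S)$ is finite projective and satisfies $\D_\Sen(V_S)\otimes_S S'\simeq\D_\Sen(V_{S'})$ for flat $S\to S'$, and that $H^0,H^1(G_{\Qp},(S\ho_{\Qp}\Cp(j))\otimes_S V_S)$ are computed by the kernel and cokernel of $\nabla_\Sen + j$ on this finite projective module (together with $\Gamma$-descent). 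This is a genuine input from Berger--Colmez/Kedlaya--Liu, not a formality, and you should cite it rather than treat it as automatic.

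Second, the caveat ``after possibly shrinking $S$'' in the final step should be removed and is in fact unnecessary: since $S$ is affinoid, the coefficients of the Sen polynomial lie in $S$, hence are bounded, so the Hodge--Tate--Sen weights are uniformly bounded on all of $M(S)$. Consequently there is a \emph{global} $k_0$ such that $\nabla_\Sen+(k-1)$ is invertible for all $k>k_0$ (by Cayley--Hamilton, since the constant term of its characteristic polynomial is a unit). This gives $H^0=H^1=0$ for $\Cp(k-1)\otimes V_S$ when $k>k_0$, so $\ddR^{[k]}(V_S)\to\ddR^{[k-1]}(V_S)$ is an isomorphism for $k>k_0$ and the inverse system is eventually constant. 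Shrinking $S$ would in any case be circular, as gluing the local isomorphisms would itself require the base-change statement you are trying to prove. With these repairs the argument is sound and consistent with the approach underlying the cited result.
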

\begin{proof}
See the proof of \cite[Proposition 4.3.7]{Bel13}.
\end{proof}






\subsection{The modules $\D^{\dagger}_{\rig}(V_S)$ and $\D^+_{\dif}(V_S)$}
Recall that $\brig{,s}{,\Qp}$ and $\brig{}{,\Qp}$ are the ``Robba type" period rings in the theory of $\m$-modules introduced in \cite{Ber02}. In \cite{BeCo08}, Berger and Colmez construct the family version of $\m$-modules functor for free $S$-linear representations. This functor is later generalized to general $S$-linear representations by Kedlaya and the second author in \cite{KL10}. In the following, we first recall the definition of $\m$-modules over affinoid bases; we set
$S\ho_{\Qp}\brig{}{,\Qp}=\bigcup_{s>0}S\ho_{\Qp}\brig{,s}{,\Qp}$.

\begin{defn}
For $s>0$, a \emph{$\varphi$-module} over $S\widehat{\otimes}_{\Qp}\mathbf{B}^{\dag,s}_{\rig,\Qp}$ is a finite projective $S\widehat{\otimes}_{\Qp}\mathbf{B}^{\dag,s}_{\rig,\Qp}$-module $D_S^s$ equipped with an isomorphism $\varphi^*D_S^s\cong D_S^s\otimes_{S\widehat{\otimes}_{\Qp}\mathbf{B}^{\dag,s}_{\rig,\Qp}}S\widehat{\otimes}_{\Qp}\mathbf{B}^{\dag,ps}_{\rig,\Qp}.$ 
A \emph{$\varphi$-module} $D_S$ over $S\widehat{\otimes}_{\Qp}\mathbf{B}^{\dag}_{\rig,\Qp}$ is the base change to $S\widehat{\otimes}_{\Qp}\mathbf{B}^{\dag}_{\rig,\Qp}$ of a $\varphi$-module $D_S^s$ over $S\widehat{\otimes}_{\Qp}\mathbf{B}^{\dag,s}_{\rig,\Qp}$ for some $s>0$. 
A \emph{$\m$-module} over $S\widehat{\otimes}_{\Qp}\mathbf{B}^{\dag,s}_{\rig,\Qp}$ is a $\varphi$-module $D_S^s$ over $S\widehat{\otimes}_{\Qp}\mathbf{B}^{\dag,s}_{\rig,\Qp}$ equipped with a commuting $\mathbf{B}^{\dag,s}_{\rig,\Qp}$-semilinear and $S$-linear continuous action of $\Gamma$. A \emph{$\m$-module} $D_S$ over $S\widehat{\otimes}_{\Qp}\mathbf{B}^{\dag}_{\rig,\Qp}$ is the base change to $S\widehat{\otimes}_{\Qp}\mathbf{B}^{\dag}_{\rig,\Qp}$ of a $\m$-module $D_S^s$ over $S\widehat{\otimes}_{\Qp}\mathbf{B}^{\dag,s}_{\rig,\Qp}$ for some $s>0$. 
\end{defn}

Let $V_S$ be an $S$-linear $G_{\Qp}$-representation of rank $d$.  For any sufficiently large $s$, one can construct a $\m$-modules $\D^{\dag, s}_{\rig}(V_S)$ of rank $d$ over $S\ho_{\Qp}\bdag{,s}_{\rig,\Qp}$ such that for any $x\in M(S)$, $\D^{\dag,s}_{\rig}(V_S)\otimes_S S/\MM_x$ is naturally isomorphic to $\D_{\rig}^{\dag,s}(V_x)$. We set
\[
\D^{\dag}_\rig(V_S)=(S\ho_{\Qp}\bdag{}_{\rig,\Qp})\otimes_{S\ho_{\Qp}\bdag{,s}_{\rig,\Qp}}\D^{\dag, s}_\rig(V_S)=\bigcup_{s}\D^{\dag, s}_\rig(V_S),
\]
which is a $\m$-module of rank $d$ over $S\ho_{\Qp}\bdag{}_{\rig,\Qp}$ and specializes to $\D^{\dag}_\rig(V_x)$ for any $x\in M(S)$. Moreover, $\D^{\dag}_\rig(V_S)$ is \emph{\'etale} in the sense of \cite[Definition 6.3]{KL10} (though we do not need this fact in this paper).

Recall that for $0<s\leq r_n=p^{n-1}(p-1)$, one has the localization map
\[
\iota_n:\brig{,s}{,\Qp}\rightarrow \Qp(\zeta_{p^n})[[t]].
\]
This induces a continuous map $S\ho_{\Qp}\bdag{,s}_{\rig,\Qp}\rightarrow S\ho_{\Qp}\Qp(\zeta_{p^n})[[t]]$. Define
\[\D^{+,n}_{\dif}(V_S)=(S\ho_{\Qp}\Qp(\zeta_{p^n})[[t]])\otimes_{\iota_n,S\ho_{\Qp}\bdag{,s}_{\rig,\Qp}}\D^{\dag,s}_{\rig}(V_S).\]
It is clear that $\D^{+,n}_{\dif}(V_S)$ is a locally free $S\ho_{\Qp}\Qp(\zeta_{p^n})[[t]]$-module of rank $d$ equipped with a semilinear $\Gamma$-action. Abusing the notation, we still denote by $\iota_n$ the natural map $\iota_n:\D^{\dag,s}_{\rig}(V_S)\rightarrow \D^{+,n}_{\dif}(V_S)$. We define
\[\D^{+}_{\dif}(V_S)=\bigcup_{n}\D^{+,n}_{\dif}(V_S).\]

The following theorem generalizes Berger's comparisons between $p$-adic Hodge theory and $\m$-modules functor to $S$-linear representations.

\begin{theorem}\label{thm:comparison}\cite[Theorem 4.2.7, Theorem 4.2.8]{Bel13}
For an $S$-linear representation $V_S$ of $G_{\Qp}$, we have
$\ddR^+(V_S)=\D_{\dif}^+(V_S)^{\Gamma}$ and $\dcris^+(V_S)=\D^{\dagger}_{\rig}(V_S)^{\Gamma}$.
\end{theorem}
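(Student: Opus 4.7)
My plan is to deduce both identities from Berger's classical pointwise comparisons by an analytic spreading-out argument over the affinoid $S$. I would treat the de Rham statement first and then adapt the argument to the crystalline case.

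For the de Rham identity $\ddR^+(V_S) = \D^+_{\dif}(V_S)^{\Gamma}$, I would construct natural maps in both directions from the localization morphisms $\iota_n : \brig{,s}{,\Qp} \to \Qp(\zeta_{p^n})[[t]]$ together with the inclusion $\Qp(\zeta_{p^n})[[t]] \hookrightarrow \bdR^+$, completed-tensored over $S$ and applied to $V_S$. To show these maps are mutually inverse, I would work level-by-level modulo $t^k$: each $\bdR^+/t^k$ is a $\Qp$-Banach space on which the $\Gamma$-action admits a Tate/Sen-style normalized trace decomposition, and after completing with $S$ this decomposition survives and allows one to compute $\Gamma$-invariants of $(S\ho\bdR^+/t^k)\otimes_S V_S$ as a union of $\D^{+,n}_{\dif}(V_S)^{\Gamma}$ for $n$ large. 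Since both sides are locally free $S$-modules whose specializations at every $x\in M(S)$ are identified by Berger's classical theorem, a flat base change plus a generic rank count (using Proposition \ref{basechange}) promotes the pointwise bijection to an $S$-linear isomorphism.

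For the crystalline identity $\dcris^+(V_S) = \D^{\dagger}_{\rig}(V_S)^{\Gamma}$, I would proceed analogously, now using the Frobenius-equivariant embedding of $S\ho\brig{}{,\Qp}$ into $S\ho\bcontp$ and into $S\ho\bcris^+$. An element of $\D^{\dagger}_{\rig}(V_S)^{\Gamma}$ is automatically $H_{\Qp}$-fixed by construction and then $G_{\Qp}$-fixed after imposing the $\Gamma$-invariance, so it lands inside $\dcris^+(V_S)$. Conversely, a crystalline period is a $G_{\Qp}$-fixed vector in $(S\ho\bcris^+)\otimes V_S$ whose Frobenius orbit is automatically bounded; Berger's classical argument then exhibits such vectors as precisely the $\Gamma$-fixed elements of $\D^{\dagger}_{\rig}(V_x)$ at every $x$, and the same spreading-out across $S$ used in the de Rham case upgrades this to an $S$-linear isomorphism.

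The principal obstacle, in my view, is the interaction between the completed tensor product $S\ho -$ and the functor of continuous $G_{\Qp}$-invariants. Classically one relies on Tate's and Sen's normalized trace decompositions to split an element into an invariant part plus a coboundary; in the family setting one must verify that these decompositions remain continuous and well-defined after completion with an arbitrary $\Qp$-affinoid algebra $S$, and that the resulting invariants commute with the filtrations defining $\bdR^+$, $\bcris^+$, and $\brig{,s}{,\Qp}$. This is exactly the analytic input that Berger and Colmez had to develop to construct $\D^{\dagger,s}(V_S)$ in the first place, and the present comparison theorem ultimately rests on the same technical core.
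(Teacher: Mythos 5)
The paper gives no internal proof of this theorem; it is cited verbatim from Bellovin \cite[Theorem 4.2.7, Theorem 4.2.8]{Bel13}, so there is no ``paper's own proof'' to compare against. I will therefore assess your sketch on its own merits.

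You correctly identify the load-bearing technical ingredient: a Tate--Sen normalized-trace decomposition of $S\widehat\otimes_{\Qp}\bdR^+/t^k$ (and likewise of the crystalline and Robba-ring periods) that survives completion by an arbitrary affinoid $S$, together with the $H_{\Qp}$-decompletion that identifies $(S\widehat\otimes_{\Qp}\bdR^+)^{H_{\Qp}}$ with $\bigcup_n S\widehat\otimes_{\Qp}\Qp(\zeta_{p^n})[[t]]$. This is indeed the core of the Berger--Colmez/Bellovin argument, and your first paragraph tracks it faithfully.

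However, the step where you ``promote the pointwise bijection to an $S$-linear isomorphism'' via Berger's classical theorem, flat base change, and a rank count is a genuine gap, for two reasons. First, $\ddR^+(V_S)$, $\dcris^+(V_S)$, and $\D^+_{\dif}(V_S)^{\Gamma}$ are \emph{not} locally free $S$-modules in general --- they are not even obviously coherent, and one of the main points of the theory is that they can degenerate across $M(S)$ (e.g.\ vanish generically while being nonzero at isolated points). Second, and more fundamentally, the functors $\ddR^+$ and $\dcris^+$ do \emph{not} commute with specialization: the natural map $\ddR^+(V_S)\otimes_S k(x)\to\ddR^+(V_x)$ is in general only injective, never claimed to be an isomorphism (Proposition~\ref{basechange} gives base change only along \emph{flat} maps $S\to S'$, and evaluation at a point is not flat). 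So knowing $\ddR^+(V_x)=\D^+_{\dif}(V_x)^{\Gamma}$ for every $x\in M(S)$ by Berger's classical result tells you nothing directly about the equality of the $S$-modules. The correct proof must be carried out entirely at the level of $S$, which is exactly what the Tate--Sen machinery in the first part of your sketch is designed to do; the pointwise argument should be dropped rather than used as the final step.
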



\section{Finite slope subspace of $D^\ast$}

Let $X$ be a reduced and separated rigid analytic space over $\Qp$, and let $V_X$ be a family of $p$-adic representations over $X$ of $G_{\Qp}$. Recall that the Sen operator $\nabla=\log\gamma/\log\chi(\gamma)$, where $\gamma\in\Gamma$ is not of finite order,  is independent of the choice of $\gamma$ and gives rise to an action of the Lie algebra of $\Gamma$ on $\D_\dif^{+}(V_X)$. The Sen polynomial is defined to be the characteristic polynomial of $\nabla$ on $\D_{\mathrm{dif}}^{+}(V_X)$, which belongs to $\mathcal{O}(X)[u]$. Moreover,  the roots of the Sen polynomial are exactly the Hodge-Tate-Sen weights of $V_X$. We refer the reader to \cite{Liu12} for more details about Sen operator and polynomial in this context. 

From now on, we assume that $V_X$ has $0$ as a Hodge-Tate-Sen weight. Therefore we may write the the Sen polynomial of $V_X$ as $uQ(u)$ for some $Q(u)\in\mathcal{O}(X)[u]$. Let $\alpha\in\mathcal{O}(X)^\times$. Recall that in \cite{Liu12}, the second author introduces the notion of finite slope subspaces of $X$ with respect to the pair $(\alpha,V_X)$, which refines the original definition of finite slope subspaces introduced by Kisin \cite{Kis03}. In the following, for any rigid analytic space $Y$ over $\Qp$ and $f\in\mathcal{O}(Y)$, we denote by $Y_f$ the complement of the vanishing locus of $f$ on $Y$; it is a Zariski open subset of $Y$.  

\begin{defn}\label{def:fs-space}
For such a triple $(X,\alpha, V_X)$, we call an analytic subspace $X_{fs}\subset X$ a \emph{finite slope subspace} of $X$ with respect to the pair $(\alpha,V_X)$ if it satisfies the following conditions.
\begin{enumerate}
\item[(1)]For every integer $j\leq0$, the subspace $(X_{fs})_{Q(j)}$ is Zariski open and dense in $X_{fs}$.
\item[(2)]For any affinoid algebra $R$ over $\Qp$ and morphism $g:M(R)\ra X$ which factors through $X_{Q(j)}$ for every integer $j\leq0$, the morphism $g$ factors through $X_{fs}$ if and only if the natural map
\begin{equation}\label{eq:cris-dR}
   \iota_{n}: (\mathrm{D}^{\dag}_{\rig}(V_R))^{\varphi=g^*(\alpha),\Gamma=1}\ra \mathrm{D}_{\dif}^{+,n}(V_R)^{\Gamma}
\end{equation}
is an isomorphism for all sufficiently large $n$.
\end{enumerate}
\end{defn}
\noindent Furthermore, \cite[Theorem 3.3.1]{Liu12} ensures that $X$ has a unique finite slope subspace $X_{fs}$ associated to the pair $(\alpha,V_X)$.

Now let $\alpha_{\mathcal{C}}\in \mathcal{O}(\mathcal{C}_{p,N})^\times$ be the $U_p$-eigenvalue. That is, for any $x\in\mathcal{C}_{p,N}$, $\alpha(x)$ is the $U_p$-eigenvalue of the overconvergent eigenform represented by $x$. Let $\alpha_{\mathcal{\tilde{C}}}\in \mathcal{O}(\mathcal{\tilde{C}}_{p,N})^\times$ and $\alpha\in\mathcal{O}(D^\ast)^\times$ be the pullbacks of $\alpha_{\mathcal{C}}$. Since the family of $p$-adic representations $V_{\mathcal{\tilde{C}}}$ has 0 as a Hodge-Tate-Sen weight, we may write the Sen polynomial of $V^\ast_{\mathcal{\tilde{C}}}$ as $T(T-\kappa_{\mathcal{\tilde{C}}})$ for some $\kappa_{\mathcal{\tilde{C}}}\in\mathcal{O}(\mathcal{\tilde{C}}_{p,N})$. For any $x\in\mathcal{\tilde{C}}_{p,N}$, we say $x$ is of \emph{integral weight} if $\kappa_{\tilde{\mathcal{C}}}(x)\in\mathbb{Z}$. In particular, the Hodge-Tate weights of $V_x$ are all integers. Let $\kappa=h^\ast(\kappa_{\mathcal{\tilde{C}}})\in\mathcal{O}(D^\ast)$. It follows that the Sen polynomial of $V^\ast_{D^\ast}$ is $T(T-\kappa)$.
\begin{prop}
The finite slope subspace $(D^{\ast})_{fs}$ of the punctured disk $D^{\ast}$ associated to $(\alpha, V^\ast_{D^{\ast}})$ is $D^{\ast}$ itself.
\end{prop}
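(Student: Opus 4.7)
The plan is to verify directly that $D^{\ast}$ itself satisfies the two defining conditions in Definition \ref{def:fs-space} with respect to the pair $(V^\ast_{D^\ast}, \alpha)$; by the uniqueness statement of \cite[Theorem 3.3.1]{Liu12}, this will force $(D^{\ast})_{fs} = D^{\ast}$. The key external input is the assertion that the normalized eigencurve is already its own finite slope subspace, i.e. $(\mathcal{\tilde{C}}_{p,N})_{fs} = \mathcal{\tilde{C}}_{p,N}$ with respect to $(V^\ast_{\mathcal{\tilde{C}}}, \alpha_{\mathcal{\tilde{C}}})$. This is a reformulation of Kisin's characterization of the eigencurve \cite{Kis03} in the refined framework of \cite{Liu12}, and is the mechanism by which condition (2) can be transported from the eigencurve back along $h$ to $D^{\ast}$.

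For condition (1), since $D^{\ast}$ is a smooth connected (hence irreducible and integral) rigid curve, the open subspace $(D^{\ast})_{Q(j)}$ is scheme-theoretically dense as soon as $Q(j) = j - \kappa$ does not vanish identically on $D^{\ast}$, i.e. as soon as $\kappa$ is not the constant function $j$. Since $h$ is assumed dominant (hence non-constant), and the fibres of $\pi:\mathcal{\tilde{C}}_{p,N}\to\mathcal{W}_N$ are discrete (each component of the eigencurve being locally finite over the weight space), the composition $\pi\circ h$ is non-constant on $D^{\ast}$; consequently $\kappa$, which is determined by $\pi\circ h$ up to a shift, is itself non-constant on $D^{\ast}$. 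In particular, $\kappa\neq j$ as functions on $D^{\ast}$ for every integer $j$, so condition (1) holds.

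For condition (2), let $g:M(R)\to D^{\ast}$ be an affinoid morphism factoring through $(D^{\ast})_{Q(j)}$ for every integer $j\leq 0$. Because $V^\ast_{D^\ast}$ and $\alpha$ are pulled back via $h$, the Sen polynomial satisfies $h^\ast Q_{\mathcal{\tilde{C}}}(j) = Q(j)$, and hence $(h\circ g)^\ast Q_{\mathcal{\tilde{C}}}(j) = g^\ast Q(j)$ is invertible in $R$ by hypothesis. Thus $h\circ g:M(R)\to\mathcal{\tilde{C}}_{p,N}$ factors through $(\mathcal{\tilde{C}}_{p,N})_{Q_{\mathcal{\tilde{C}}}(j)}$ for every $j\leq 0$, and trivially through $\mathcal{\tilde{C}}_{p,N} = (\mathcal{\tilde{C}}_{p,N})_{fs}$. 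Applying condition (2) of Definition \ref{def:fs-space} to the eigencurve and the morphism $h\circ g$ yields that the natural map (\ref{eq:cris-dR}) is an isomorphism for the pullback family $V^\ast_R = (h\circ g)^\ast V^\ast_{\mathcal{\tilde{C}}}$ with Frobenius eigenvalue $g^\ast(\alpha) = (h\circ g)^\ast \alpha_{\mathcal{\tilde{C}}}$, which is precisely condition (2) for $D^{\ast}$.

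The main obstacle is the initial claim $(\mathcal{\tilde{C}}_{p,N})_{fs} = \mathcal{\tilde{C}}_{p,N}$. This is essentially built into the construction of the eigencurve as the finite slope subspace of the ambient weight--eigenvalue space, but it requires carefully matching the refined notion of \cite{Liu12} against the family of Galois representations $V_{\mathcal{\tilde{C}}}$ obtained from the universal pseudo-representation by normalization. Granting that identification, the remainder of the proof is formal and reduces to the two verifications above.
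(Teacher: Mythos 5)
Your proof is correct and follows essentially the same strategy as the paper: both reduce to the fact from \cite{Liu12} that the normalized eigencurve is its own finite slope subspace with respect to $(V^\ast_{\mathcal{\tilde{C}}},\alpha_{\mathcal{\tilde{C}}})$, then transport conditions (1) and (2) of Definition~\ref{def:fs-space} to $D^\ast$ along the dominant map $h$. The only departure is in condition (1): the paper directly pulls back the scheme-theoretic density of $\mathcal{\tilde{C}}_{(\kappa_{\mathcal{\tilde{C}}}-j)}$ under $h$ (using that $D^\ast$ is an integral curve, so the preimage of a nowhere-dense closed analytic subset is again nowhere dense), whereas you invoke local finiteness of the weight map $\pi$ to conclude that $\kappa$ is non-constant on $D^\ast$ — both are valid, and this is a minor variant rather than a different route.
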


\begin{proof}
To show the proposition, we just need to check that the triple ($D^{\ast}, \alpha, V^\ast_{D^\ast}$) satisfies the conditions (1) and (2) of Definition \ref{def:fs-space}. Since the finite slope subspace associated to the pair $(\alpha_{\mathcal{\tilde{C}}}, V^\ast_{\mathcal{\tilde{C}}})$ is $\mathcal{\tilde{C}}_{p,N}$ itself by the main results of \cite{Liu12}, we know that the triple ($\mathcal{\tilde{C}}_{p,N}, \alpha_{\mathcal{\tilde{C}}}, V^\ast_{\mathcal{\tilde{C}}}$) satisfies the conditions (1) and (2). Hence $(\mathcal{\tilde{C}}_{p,N})_{(\kappa_{\mathcal{\tilde{C}}}-j)}$ is Zariski open and  dense in $\mathcal{\tilde{C}}$ for every $j\leq0$. Since $h$ is dominant and $D^\ast$ is of dimension 1, we deduce that $D^\ast_{(\kappa-j)}=h^{-1}((\mathcal{\tilde{C}}_{p,N})_{(\kappa_{\mathcal{\tilde{C}}}-j)})$ is Zariski open and dense in $D^\ast$. Thus the triple ($D^{\ast}, \alpha, V^\ast_{D^\ast}$) satisfies the condition (1). It follows immediately that the triple ($D^{\ast}, \alpha, V^\ast_{D^\ast}$) satisfies the condition (2) as well because $D^\ast_{(\kappa-j)}$ maps to
$(\mathcal{\tilde{C}}_{p,N})_{(\kappa_{\mathcal{\tilde{C}}}-j)}$ for every $j\leq 0$.
\end{proof}

\begin{prop}\label{isomLiu}
For any affinoid subdomain $M(R)$ of $D^\ast$ and $k>\log_p|\alpha^{-1}|$, where $|\cdot|$ denotes the norm taken on $M(R)$,
the natural map
\[
(\D_\rig^\dag(V^\ast_R)))^{\varphi=\alpha,\Gamma=1}\rightarrow (\D_\dif^+(V^\ast_R)/(t^k))^\Gamma
\]
is an isomorphism. Furthermore, $(\D_\rig^\dag(V^\ast_R)))^{\varphi=\alpha,\Gamma=1}$ is a locally free $R$-module of rank $1$.
\end{prop}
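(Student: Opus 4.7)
The plan is to deduce this proposition from the preceding one by unwinding Definition~\ref{def:fs-space}(2). The preceding proposition establishes $(D^\ast)_{fs}=D^\ast$ with respect to $(V^\ast_{D^\ast},\alpha)$, so for any affinoid subdomain $M(R)\subset D^\ast$, the inclusion $g:M(R)\hookrightarrow D^\ast$ factors (after restricting to the Zariski-dense admissible open where $\kappa-j\in\mathcal{O}^\times$ for all $j\leq 0$) through each $D^\ast_{Q(j)}$. Applying condition~(2) of Definition~\ref{def:fs-space} then yields an isomorphism
\[
\iota_n : (\D^\dag_\rig(V^\ast_R))^{\varphi=\alpha,\,\Gamma=1} \xrightarrow{\sim} \D^{+,n}_\dif(V^\ast_R)^{\Gamma}
\]
for all $n$ sufficiently large; by coherence of both sides in $R$ this extends to the full affinoid $M(R)$.

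Next I would factor the map in the proposition through $\iota_n$, namely as
\[
(\D^\dag_\rig(V^\ast_R))^{\varphi=\alpha,\,\Gamma=1} \xrightarrow{\iota_n} \D^{+,n}_\dif(V^\ast_R)^{\Gamma} \longrightarrow (\D^+_\dif(V^\ast_R)/t^k)^{\Gamma},
\]
where the second arrow is induced by $\D^{+,n}_\dif\hookrightarrow \D^+_\dif$ followed by reduction modulo $t^k$. Since the first arrow is an isomorphism, the task reduces to showing the composite is one. For injectivity, suppose $x$ lies in the source and $\iota_n(x)\in t^k\D^+_\dif$. The compatibility $\iota_{n+1}\circ\varphi=\iota_n$ combined with $\varphi(x)=\alpha x$ gives $\iota_{n+m}(x)=\alpha^{-m}\iota_n(x)\in \alpha^{-m}t^k\D^{+,n+m}_\dif$ for every $m\geq 0$. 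Standard Frobenius-regularization norm estimates on $\D^\dag_\rig$ bound the growth of $\iota_{n+m}(x)$ in the $t$-adic filtration; the hypothesis $k>\log_p|\alpha^{-1}|_{\mathrm{sp}}$, equivalently $|p^k\alpha|_{\mathrm{sp}}>1$, ensures that the slope $|\alpha^{-1}|^m_{\mathrm{sp}}$ is dominated by $p^{km}$, so that iterating forces $\iota_{n+m}(x)$ to be divisible by arbitrarily high powers of $t$, hence $x=0$.

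For surjectivity and the rank-$1$ assertion, I would pass to a classical crystalline point $x_0\in M(R)$, where $V^\ast_{x_0}$ comes from an eigenform with two distinct Frobenius eigenvalues, one equal to $\alpha(x_0)$; thus $(\dcris^+(V^\ast_{x_0}))^{\varphi=\alpha(x_0)}$ is one-dimensional, and by Theorem~\ref{thm:comparison} the fiber of the source at $x_0$ is one-dimensional. Zariski-density of such classical points on $M(R)$, together with the $R$-coherent structure inherited from the isomorphism $\iota_n$, then forces the source to be locally free of rank~$1$; a dimension count at $x_0$ identifies the image with all of $(\D^+_\dif/t^k)^\Gamma$, proving the isomorphism claim. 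The main obstacle is the Frobenius-regularization step in the injectivity argument, which is exactly where the explicit bound $k>\log_p|\alpha^{-1}|_{\mathrm{sp}}$ is essential in coupling the $\varphi$-slope to the $t$-adic depth.
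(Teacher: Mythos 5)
Your overall strategy — deduce the isomorphism from the fact that $(D^\ast)_{fs}=D^\ast$ — is in the same spirit as the paper, but you try to extract the proposition directly from Definition~\ref{def:fs-space}(2), and this does not quite work. Condition (2) only applies to morphisms $g:M(R)\to D^\ast$ that factor through $D^\ast_{Q(j)}$ for \emph{every} $j\leq 0$; a general affinoid subdomain $M(R)\subset D^\ast$ may well contain integral-weight points, so the hypothesis fails. You acknowledge this and restrict to the dense open where $\kappa-j$ is a unit, but the subsequent claim that the isomorphism ``extends to the full affinoid by coherence'' is unjustified: extending an isomorphism of coherent sheaves from a Zariski-dense open to the whole affinoid is precisely the nontrivial content here. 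The paper sidesteps the issue entirely by invoking \cite[Theorem 3.3.4]{Liu12}, which is a quantitative strengthening tailored to exactly this situation: it produces the isomorphism onto $(\D^+_\dif(V^\ast_R)/(t^k))^\Gamma$ (note the target is not $\D^{+,n}_\dif(V^\ast_R)^\Gamma$ as in Definition~\ref{def:fs-space}(2), and reconciling the two is additional work) with the explicit bound $k>\log_p|\alpha^{-1}|_{\mathrm{sp}}$, valid for arbitrary affinoid subdomains. Your ``Frobenius-regularization norm estimate'' sketch for injectivity is essentially the heart of that theorem's proof, but you do not carry it out, and the surjectivity direction is not addressed at all beyond a dimension count at one point.

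The rank-$1$ argument also has a gap. You propose to pass to a classical crystalline point $x_0\in M(R)$; but there is no reason such a point should lie in an arbitrary affinoid subdomain $M(R)$ of $D^\ast$ (classical points are Zariski-dense in the eigencurve, not in every small affinoid in its preimage under $h$). The paper's actual argument avoids this: since $M(R)$ is a smooth affinoid curve, $(\D^+_\dif(V^\ast_R)/(t^k))^\Gamma$ is finite and torsion-free, hence locally free; then for the Zariski-dense set of points $x\in M(R)$ with non-integral weight (which is dense in \emph{every} such $M(R)$), \cite[Corollary 1.5.6]{Liu12} shows the specialization map to $(\D^+_\dif(V^\ast_x)/(t^k))^\Gamma$ is an isomorphism onto a $1$-dimensional space. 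This pins down the rank everywhere, whereas your single crystalline specialization would at best show the cokernel vanishes at one point, not that it vanishes identically.
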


\begin{proof}
Since the finite slope subspace of $D^\ast$ is itself, it follows immediately from \cite[Theorem 3.3.3]{Liu12} that the given map is an isomorphism. Note that $M(R)$ is smooth of dimension 1. We deduce that the $R$-module $(\D_\dif^+(V_R)/(t^k))^\Gamma$ is locally free because it is finite and torsion free. Moreover, let $x\in M(R)$ be a point with non-integral weight; that is, $h(x)\in\mathcal{\tilde{C}}_{p,N}$ is not of integral weight. Then by \cite[Corollary 1.5.7]{Liu12}, the natural map
\[
(\D_\dif^+(V^\ast_R)/(t^k))^\Gamma\otimes_{R}k(x)\ra (\D_\dif^+(V^\ast_x)/(t^k))^\Gamma
\]
is an isomorphism. The right hand side is of $k(x)$-dimension 1 by \cite[Proposition 4.1.5(4)]{Liu12}. Thus $(\D_\dif^+(V^\ast_R)/(t^k))^\Gamma$ is locally free of rank 1 around $x$. 

By the main results of \cite{Buz07}, the projection from any irreducible component of $\mathcal{C}_{p,N}$ to the weight space is locally in-the-domain finite flat. We therefore deduce that the composition $\pi\circ h$ is dominant.  This implies that the restriction of $\pi\circ h$ on any irreducible component of $M(R)$ is dominant. We therefore deduce that the subset of points with non-integral weights is Zariski dense in $M(R)$. This yields that $(\D_\dif^+(V^\ast_R)/(t^k))^\Gamma$ is a locally free $R$-module of rank 1, and so is $(\D_\rig^\dag(V^\ast_R)))^{\varphi=\alpha,\Gamma=1}$.
\end{proof}

\begin{cor}\label{cor:comparison}
For any affinoid subdomain $M(R)$ of $D^\ast$, the natural map
\[
\dcris^+(V^\ast_R)^{\varphi=\alpha}\ra\ddR^+(V^\ast_R)
\]
is an isomorphism. Furthermore, they are locally free $R$-modules of rank 1.
\end{cor}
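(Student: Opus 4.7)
My plan is to factor the isomorphism of Proposition \ref{isomLiu} through the natural map of the corollary, reducing everything to showing that a certain quotient map on $\Gamma$-invariants is injective; the latter will then be verified by a Sen-theoretic computation at generic points of components of $M(R)$.

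Using the identifications $\dcris^+(V_R^\ast)^{\varphi=\alpha}=(\D^{\dagger}_{\rig}(V_R^\ast))^{\varphi=\alpha,\Gamma=1}$ and $\ddR^+(V_R^\ast)=\D_{\dif}^+(V_R^\ast)^\Gamma$ supplied by Theorem \ref{thm:comparison}, and choosing $k>\log_p|\alpha^{-1}|_{\mathrm{sp}}$, the localizations $\iota_n$ followed by reduction modulo $t^k$ assemble into a commutative diagram
\[
\dcris^+(V_R^\ast)^{\varphi=\alpha} \xrightarrow{\,a\,} \ddR^+(V_R^\ast) \xrightarrow{\,b\,} (\D_{\dif}^+(V_R^\ast)/(t^k))^\Gamma
\]
in which $b\circ a$ is the isomorphism furnished by Proposition \ref{isomLiu}. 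Hence $a$ is injective and $b$ is surjective, and the corollary will follow once $b$ is shown to be injective.

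The kernel of $b$ equals $(t^k\D_{\dif}^+(V_R^\ast))^\Gamma$. Since $\D_{\dif}^+(V_R^\ast)$ is locally free over $R\ho_{\Qp}\Qp(\zeta_{p^n})[[t]]$, it is $R$-flat, and so this kernel is $R$-torsion-free. It therefore suffices to verify that it vanishes after localization at the generic point of each irreducible component of the smooth affinoid curve $M(R)$. At such a generic point $\eta$, the fibre $V_\eta^\ast$ has Sen polynomial $T(T-\kappa(\eta))$; because $h$ is dominant, $\kappa$ is non-constant on every component of $M(R)$, so $\kappa(\eta)\notin\mathbb{Z}$, and the only integer Sen weight of $V_\eta^\ast$ is $0$. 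If $v\in\ddR^+(V_\eta^\ast)\cap t\D_{\dif}^+(V_\eta^\ast)$ were non-zero, writing $v=tu$ would force $u$ to be a $\chi^{-1}$-eigenvector of $\Gamma$, so the Sen class of $u$ would have to be an eigenvector of the Sen operator with eigenvalue $-1$, contradicting $\{0,\kappa(\eta)\}$. Thus the Sen class of $u$ vanishes, $v\in t^2\D_{\dif}^+$, and iterating places $v\in\bigcap_{k\geq 1}t^k\D_{\dif}^+(V_\eta^\ast)=0$, yielding the desired vanishing.

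It follows that $b$, and hence $a$, is an isomorphism, and the rank-$1$ local freeness of $\ddR^+(V_R^\ast)$ inherits from that of $\dcris^+(V_R^\ast)^{\varphi=\alpha}$ established in Proposition \ref{isomLiu}. The step I expect to require the most care is the commutation of $\Gamma$-invariants with base change to generic points of components of $M(R)$, which I would handle by decomposing $M(R)$ along its connected components (coinciding with its irreducible components by smoothness) and invoking the standard compatibility of $\D_{\dif}^+$ with base change.
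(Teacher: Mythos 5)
Your argument is correct in spirit but is genuinely different from, and considerably more elaborate than, the paper's. The paper's proof is a one-liner once Proposition~\ref{isomLiu} is in hand: since $\Gamma$-invariants are the kernel of $\gamma-1$ and hence commute with the inverse limit $\D_{\dif}^+(V_R^\ast)=\varprojlim_k \D_{\dif}^+(V_R^\ast)/(t^k)$, and since the maps to $(\D_{\dif}^+(V_R^\ast)/(t^k))^\Gamma$ are isomorphisms for all sufficiently large $k$ (with compatible transition maps), one gets the isomorphism onto $\D_{\dif}^+(V_R^\ast)^\Gamma=\varprojlim_k(\D_{\dif}^+(V_R^\ast)/(t^k))^\Gamma$ directly, and then applies Theorem~\ref{thm:comparison}. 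In effect, the injectivity of your map $b$ is a formal consequence of having the isomorphism at level $k$ for \emph{every} large $k$, so no Sen-theoretic argument is needed.

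Your detour through $b$ and the generic points is the place where the proposal is shakier than it needs to be. Two issues. First, the inference ``$h$ dominant $\Rightarrow$ $\kappa$ non-constant on every component of $M(R)$'' is not immediate: it requires knowing that $\kappa_{\tilde{\mathcal{C}}}$ is non-constant on each component of $\tilde{\mathcal{C}}_{p,N}$ (which holds, e.g. by quasi-finiteness of $\pi$, but is not stated), together with connectedness of $D^\ast$. Second, and more seriously, the residue field $k(\eta)$ at a generic point is not an affinoid algebra, so ``$\D_{\dif}^+(V_\eta^\ast)$'' has no direct meaning, and neither flat base change for $\D_{\dif}^+$ nor $t$-adic separatedness of $\D_{\dif}^+(V_R^\ast)\otimes_R k(\eta)$ (which your iteration needs in order to conclude $v=0$) comes for free after localization. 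The cleanest fix is to drop the passage to generic points entirely: run the Sen computation over $R$ itself. For $v=tu\in(t\D_{\dif}^+(V_R^\ast))^\Gamma$, the class $\bar u\in\D_{\Sen}(V_R^\ast)$ satisfies $\Theta\bar u=-\bar u$, and Cayley--Hamilton for $\Theta$ with characteristic polynomial $T(T-\kappa)$ gives $(1+\kappa)\bar u=0$; since $\kappa+j$ is a non-zero-divisor in $R$ for all $j\ge 1$ (this is precisely condition (1) of Definition~\ref{def:fs-space}, already verified for $(D^\ast,V^\ast_{D^\ast},\alpha)$, pulled back to $M(R)$), one gets $\bar u=0$, and iterating puts $v\in\bigcap_m t^m\D_{\dif}^+(V_R^\ast)=0$ by $t$-adic separatedness over the affinoid $R$. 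This makes your torsion-freeness step and generic-point reduction unnecessary. Either way, the paper's inverse-limit argument is shorter and avoids Sen theory altogether.
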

\begin{proof}
The previous proposition implies that for sufficiently large $k$, the natural map
\[
(\D_\rig^\dag(V^\ast_R)))^{\varphi=\alpha,\Gamma=1}\rightarrow (\D_\dif^+(V^\ast_R)/(t^k))^\Gamma
\]
is an isomorphism, yielding that the natural map
\[
(\D_\rig^\dag(V^\ast_R)))^{\varphi=\alpha,\Gamma=1}\rightarrow \D_\dif^+(V^\ast_R)^\Gamma=\displaystyle{\varprojlim_{k}}(\D_\dif^+(V^\ast_R)/(t^k))^\Gamma
\]
is an isomorphism. We conclude by applying Theorem \ref{thm:comparison}.
\end{proof}


\section{De Rham periods vs. crystalline periods}
The goal of this section is to show $\ddR^+(V^\ast_D)=\dcris^+(V^\ast_D)$. The upshot is to show if a de Rham period of $V_D^\ast$ is crystalline over a closed annulus of outside radius $1$, then it is crystalline on all of $D$.  
\begin{defn} Let $A$ be a Banach algebra over $\Qp$.
\begin{enumerate}
\item[(i)] For any $n\geq 0$, define the Banach algebra $A\langle p^{-n}T\rangle$ to be the ring of  formal power series $\sum_{i\in\mathbb{N}} a_iT^i$ with $a_i\in A$ and such that $|a_i|p^{-ni}\rightarrow 0$ as $i\rightarrow \infty$. It is equipped with a Banach norm $|\sum_{i\in\mathbb{N}}a_iT^i|=\sup|a_i|p^{-ni}$.
\item[(ii)] For any $n'>n\geq 0$, define the Banach algebra $A\langle p^{-n}T, p^{n'}T^{-1}\rangle$ to be the ring of Laurent series $\sum_{i\in \ZZ} a_iT^i$ with $a_i\in A$ and such that $|a_i|p^{-ni}\rightarrow 0$ as $i\rightarrow\infty$ and $|a_i|p^{-n'i}\rightarrow 0$ as $i\rightarrow -\infty$. It is equipped with a Banach norm $|\sum_{i\in \ZZ}a_iT^i|=\max_{i\in\mathbb{Z}}\{\sup|a_i|p^{-ni}, \sup|a_i|p^{-n'i}\}$.
\end{enumerate}
\end{defn}

In the rest of the paper, let $S=L\langle T\rangle=\mathcal{O}(D)$. For any $n\geq 0$ (resp. $n'>n\geq 0$), let $S_n=L\langle p^{-n}T\rangle$ (resp. $S_{n,n'}=L\langle p^{-n}T, p^{n'}T^{-1}\rangle$). Let $V_{n}$ (resp. $V_{n,n'}$) be the restriction of $V_D$ on $M(S_n)$ (resp. $M(S_{n,n'})$). Let $A_L=A\otimes_{\Qp}L$. Using the fact that $\{p^{-ni}T^i\}_{i\in\mathbb{N}}$ and $\{p^{-ni}T^i, p^{n'(i+1)}T^{-i-1}\}_{i\in\mathbb{N}}$ form $L$-orthonormal bases of $S_n$ and $S_{n,n'}$ respectively, we deduce the following lemma.

\begin{lemma}\label{banach}
Let $A$ be a $\Qp$-Banach algebra. For any $n\geq 0$, we have natural identification of Banach algebras
\[\eta_{n, A}: S_n\ho_{\Qp}A\overset{\sim}{\longrightarrow}A_L\langle p^{-n}T\rangle. \]
Similarly, for any $n'>n\geq 0$, we have natural identification of Banach algebras
\[\eta_{n,n',A}:S_{n,n'}\ho_{\Qp}A\overset{\sim}{\longrightarrow}A_L\langle p^{-n}T, p^{n'}T^{-1}\rangle.\]
\end{lemma}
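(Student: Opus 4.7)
The plan is to invoke the standard description of a completed tensor product of a $\Qp$-Banach space with an orthonormal Schauder basis against a $\Qp$-Banach algebra. Concretely, if $E$ is a $\Qp$-Banach space with orthonormal basis $\{e_i\}_{i\in I}$ and $A$ is a $\Qp$-Banach algebra, then $E\widehat{\otimes}_{\Qp}A$ is canonically isometric (as a Banach $A$-module) to the space of formal sums $\sum_i b_i\otimes e_i$ with $b_i\in A$ and $|b_i|\to 0$ along the Fr\'echet filter on $I$, equipped with the supremum norm $\sup_i|b_i|$. This is a routine fact: the algebraic tensor product $E\otimes_{\Qp}A$ sits densely inside this sequence space, and the completion fills in exactly the null sequences.

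For the first identification, I apply this with $E=S_n$ and the orthonormal basis $e_i=p^{-ni}T^i$, $i\in\mathbb{N}$, provided by the hypothesis. An element of $S_n\widehat{\otimes}_{\Qp}A$ is then a sum $\sum_{i\geq 0}b_i\otimes p^{-ni}T^i$ with $b_i\in A$ and $|b_i|\to 0$. I would define $\eta_{n,A}$ by sending this to $\sum_{i\geq 0}a_iT^i$ with $a_i=b_ip^{-ni}$; since $|b_i|=|a_i|p^{-ni}$, the null condition $|b_i|\to 0$ translates exactly to $|a_i|p^{-ni}\to 0$, and the supremum norm transports to the Banach norm in the definition of $A\langle p^{-n}T\rangle$. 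The inverse $a_iT^i\mapsto (a_ip^{ni})\otimes p^{-ni}T^i$ shows $\eta_{n,A}$ is an isometric bijection, and multiplicativity follows because it is already a ring map on the dense subring $S_n\otimes_{\Qp}A$.

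The second identification proceeds in the same way, now using the orthonormal basis $\{p^{-ni}T^i,\ p^{n'(i+1)}T^{-i-1}\}_{i\in\mathbb{N}}$ of $S_{n,n'}$. I would write a general element as a pair of null sequences $(b_i)$ and $(c_i)$ in $A$, and reindex the negative part by $j=-i-1$: the dictionary is $a_i=b_ip^{-ni}$ for $i\geq 0$ and $a_j=c_{-j-1}p^{n'j}$ for $j<0$. The two null conditions thereby become $|a_i|p^{-ni}\to 0$ as $i\to+\infty$ and $|a_i|p^{-n'i}\to 0$ as $i\to-\infty$, matching the definition of $A\langle p^{-n}T,p^{n'}T^{-1}\rangle$. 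The only point that requires a moment's care is that the paper's norm $\max\{\sup_i|a_i|p^{-ni},\ \sup_i|a_i|p^{-n'i}\}$ genuinely computes the supremum over all basis coefficients: this holds because $n\leq n'$ implies $p^{-ni}\geq p^{-n'i}$ for $i\geq 0$ (so the first sup controls the positive tail) while the reverse holds for $i<0$ (so the second controls the negative tail).

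There is no real obstacle; the proof is essentially the bookkeeping of matching the two orthonormal decompositions. The only non-formal input is the general Banach-space statement about completed tensor products with orthonormalizable spaces, which I would either quote from a standard reference on non-archimedean functional analysis or prove directly by observing that partial sums form a Cauchy net whose limit has the claimed norm.
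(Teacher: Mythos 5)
Your argument is exactly the one the paper intends: the paper's only justification for the lemma is the sentence preceding it, which points out that $\{p^{-ni}T^i\}_{i\in\mathbb{N}}$ and $\{p^{-ni}T^i,\ p^{n'(i+1)}T^{-i-1}\}_{i\in\mathbb{N}}$ are orthonormal bases of $S_n$ and $S_{n,n'}$ respectively, and you have simply spelled out the standard completed-tensor-product description that this implies. One small slip in the bookkeeping: for $j<0$ the dictionary should read $a_j=c_{-j-1}p^{-n'j}$ (since the basis vector is $p^{n'(i+1)}T^{-i-1}=p^{-n'j}T^j$ with $j=-i-1$), not $c_{-j-1}p^{n'j}$; the conclusion you drew from it, namely $|a_j|p^{-n'j}\to 0$ as $j\to-\infty$, is the correct one, so this is only a typo and does not affect the proof.
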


\begin{defn}\label{def:complete-tensor-frechet}
Let $A=\displaystyle{\varprojlim_{j\in J} A_j}$ be a Fr\'echet algebra where $A_j$'s are $\Qp$-Banach algebras.

\begin{enumerate}
\item[(i)]Define the Fr\'echet algebra $A\langle p^{-n}T\rangle$ to be the inverse limit of Banach algebras $A_j\langle p^{-n}T\rangle$.

\item[(ii)]For any $n'>n\geq 0$, define the Fr\'echet algebra $A\langle p^{-n}T, p^{n'}T^{-1}\rangle$ to be the inverse limit of Banach algebras $A_j\langle p^{-n}T, p^{n'}T^{-1}\rangle$.
\end{enumerate}
\end{defn}

Note that the natural inclusions $A_j\langle p^{-n}T\rangle\hookrightarrow A_j[[T]]$
induces an injective map
\[
A\langle p^{-n}T\rangle=\displaystyle{\varprojlim_{j\in J}}
A_j\langle p^{-n}T\rangle\hookrightarrow \displaystyle{\varprojlim_{j\in J}}
A_j[[T]]=A[[T]].
\]
Thus one may naturally identify $A\langle p^{-n}T\rangle$ as a subring of $A[[T]]$.
Similarly, one can naturally identify $A\langle p^{-n}T, p^{n'}T^{-1}\rangle$ as a subset of $A[[T, T^{-1}]]$, which is the set of Laurent series with coefficients in $A$; note that $A[[T,T^{-1}]]$ is not a ring!

\begin{lemma}\label{frechet}
Keep notations as in Definition \ref{def:complete-tensor-frechet}. For any $n\geq 0$, we have natural identification of Fr\'echet algebras
\[
\eta_{n, A}: S_n\ho_{\Qp}A\overset{\sim}{\longrightarrow}A_L\langle p^{-n}T\rangle.
\]
Similarly, for any $n'>n\geq 0$, we have natural identification of Fr\'echet algebras
\[\eta_{n,n',A}:S_{n,n'}\ho_{\Qp}A\overset{\sim}{\longrightarrow}A_L\langle p^{-n}T, p^{n'}T^{-1}\rangle.
\]
\end{lemma}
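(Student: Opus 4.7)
The plan is to reduce the Fr\'echet statement to the Banach statement of Lemma \ref{banach} by passing to the inverse limit.

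First, I would apply Lemma \ref{banach} to each Banach algebra $A_j$ in the defining inverse system of $A$. This produces, for every $j \in J$, natural isomorphisms of Banach algebras
\[
\eta_{n,A_j}: S_n\ho_{\Qp}A_j \xrightarrow{\sim} A_j\langle p^{-n}T\rangle
\qquad\text{and}\qquad
\eta_{n,n',A_j}: S_{n,n'}\ho_{\Qp}A_j \xrightarrow{\sim} A_j\langle p^{-n}T,p^{n'}T^{-1}\rangle.
\]
The key next step is to check that these isomorphisms are functorial in the Banach algebra argument. On the right-hand side this is obvious from the explicit description of the norm and coefficient-wise action. On the left-hand side it follows from the universal property of the completed projective tensor product of $\Qp$-Banach spaces. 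Consequently, the $\eta_{n,A_j}$ (resp.\ $\eta_{n,n',A_j}$) are compatible with the transition maps $A_{j'}\to A_j$ of the inverse system defining $A$.

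Having verified compatibility, I take the inverse limit over $j \in J$. The right-hand sides become $A\langle p^{-n}T\rangle$ and $A\langle p^{-n}T, p^{n'}T^{-1}\rangle$ by Definition \ref{def:complete-tensor-frechet}. For the left-hand sides, the completed tensor product of the Banach algebra $S_n$ (resp.\ $S_{n,n'}$) with the Fr\'echet algebra $A$ is by construction the inverse limit of the Banach completed tensor products $S_n\ho_{\Qp}A_j$ (resp.\ $S_{n,n'}\ho_{\Qp}A_j$) endowed with the Fr\'echet topology defined by the family of seminorms pulled back from the $A_j$. Thus the inverse limit of the $\eta_{n,A_j}$ (resp.\ $\eta_{n,n',A_j}$) is the desired isomorphism of Fr\'echet algebras.

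The only nontrivial point in this plan is the identification $S_n\ho_{\Qp}A = \varprojlim_j(S_n\ho_{\Qp}A_j)$ (and likewise in the annulus case), i.e.\ that the completed tensor product of a Banach space with a Fr\'echet limit is calculated term by term. This is the standard compatibility of the completed projective tensor product with inverse limits in the Fr\'echet category, and I expect it to be the main (and essentially only) subtle check; once it is in place the rest of the argument is formal.
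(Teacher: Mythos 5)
Your proposal is correct and takes essentially the same approach as the paper, which simply says to apply Lemma~\ref{banach} to each $A_j$ and pass to the inverse limit; you have merely spelled out the functoriality and the standard compatibility of $\ho$ with Fr\'echet limits that the paper leaves implicit.
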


\begin{proof}
Apply the previous lemma to the $\Qp$-Banach algebras $A_j$ and take inverse limits.
\end{proof}

In particular, Lemma~\ref{banach} applies to $A=\bcris^+$ and Lemma~\ref{frechet} applies to $A=\bdR^+=\varprojlim \bdR^+/(t^i)$.

\begin{lemma}\label{lastlemma}
\begin{enumerate}
\item[(i)] For any $n\geq 0$, the continuous map $\bcris^+\rightarrow \bdR^+$ induces a natural inclusion $(\bcris^+\otimes_{\Qp}L)\langle p^{-n}T\rangle \hookrightarrow (\bdR^+\otimes_{\Qp}L)\langle p^{-n}T\rangle$.
\item[(ii)] For any $n'>n\geq 0$, the continuous map $\bcris^+\rightarrow \bdR^+$ induces a natural inclusion 
\[
(\bcris^+\otimes_{\Qp}L)\langle p^{-n}T, p^{n'}T^{-1}\rangle \hookrightarrow (\bdR^+\otimes_{\Qp}L)\langle p^{-n}T, p^{n'}T^{-1}\rangle.
\]
\end{enumerate}

\end{lemma}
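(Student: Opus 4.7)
The plan is to reduce both parts to the classical injectivity $\bcris^+ \hookrightarrow \bdR^+$ of Fontaine's period rings, by comparing formal-series coefficients. Before anything else, I would record the observation already made after Definition 5.7: for any $\Qp$-Banach (or Fr\'echet) algebra $A$, the natural maps $A\langle p^{-n}T\rangle \hookrightarrow A[[T]]$ and $A\langle p^{-n}T, p^{n'}T^{-1}\rangle \hookrightarrow A[[T,T^{-1}]]$ are injective essentially by definition, since an element of the Tate-type algebra is uniquely determined by its coefficient sequence.

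For part (i), I would first check well-definedness of the map. For each $k \geq 1$, the continuous map $\bcris^+ \to \bdR^+/(t^k)$ is bounded, so the coefficient-wise assignment $\sum_{i\geq 0} a_i T^i \mapsto \sum_{i\geq 0} \bar a_i T^i$ preserves the condition $|a_i|\,p^{-ni} \to 0$ and defines a continuous map
\[
\bcris^+\langle p^{-n}T\rangle \longrightarrow (\bdR^+/(t^k))\langle p^{-n}T\rangle.
\]
These maps are compatible as $k$ varies, so by the inverse-limit definition $\bdR^+\langle p^{-n}T\rangle=\varprojlim_k (\bdR^+/(t^k))\langle p^{-n}T\rangle$ they assemble into the desired map $\bcris^+\langle p^{-n}T\rangle \to \bdR^+\langle p^{-n}T\rangle$. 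For injectivity, I would use the commutative square
\[
\xymatrix{
\bcris^+\langle p^{-n}T\rangle \ar[r]\ar@{^{(}->}[d] & \bdR^+\langle p^{-n}T\rangle \ar@{^{(}->}[d] \\
\bcris^+[[T]] \ar[r] & \bdR^+[[T]]
}
\]
whose bottom row is injective coefficient-wise by $\bcris^+\hookrightarrow\bdR^+$. Any element of the kernel of the top row must therefore have all coefficients zero, so it is itself zero.

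For part (ii), the argument is identical after replacing power series by Laurent series. Continuity of $\bcris^+\to\bdR^+/(t^k)$ preserves both growth conditions $|a_i|\,p^{-ni}\to 0$ (as $i\to+\infty$) and $|a_i|\,p^{-n'i}\to 0$ (as $i\to-\infty$), yielding a well-defined continuous map $\bcris^+\langle p^{-n}T, p^{n'}T^{-1}\rangle \to \bdR^+\langle p^{-n}T, p^{n'}T^{-1}\rangle$. Injectivity follows from the analogous diagram, with $[[T,T^{-1}]]$ replacing $[[T]]$, since the identifications with subsets of $\bcris^+[[T,T^{-1}]]$ and $\bdR^+[[T,T^{-1}]]$ are injective and $\bcris^+\hookrightarrow\bdR^+$ is injective on coefficients.

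No serious obstacle is expected here: the lemma is a formal consequence of the injectivity of $\bcris^+\hookrightarrow\bdR^+$ together with the inverse-limit description of $\bdR^+\langle\cdot\rangle$. The only bookkeeping required is to track the Fr\'echet topology on $\bdR^+$ via the quotients $\bdR^+/(t^k)$, which is precisely the content of Definition 5.7.
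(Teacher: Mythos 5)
Your proposal is correct and follows essentially the same route as the paper: the paper's proof is exactly the commutative square you wrote, concluding that the composition $\bcris^+\langle p^{-n}T\rangle \to \bdR^+\langle p^{-n}T\rangle \to \bdR^+[[T]]$ is injective (via $\bcris^+[[T]] \to \bdR^+[[T]]$) and hence the top map is injective, with (ii) handled similarly. The additional remarks you give about well-definedness through the inverse-limit description of $\bdR^+\langle p^{-n}T\rangle$ are a modest fleshing-out of what the paper leaves implicit, not a different argument.
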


\begin{proof}
By the commutative diagram

\[\xymatrix@C=90pt@R=50pt{
(\bcris^+\otimes_{\Qp}L)\langle p^{-n}T\rangle\ar[r]^{}\ar[d]^{} &  (\bdR^+\otimes_{\Qp}L)\langle p^{-n}T \rangle\ar[d]^{}\\
(\bcris^+\otimes_{\Qp}L)[[T]]\ar[r]^{} &  (\bdR^+\otimes_{\Qp}L)[[T]],
}
\]
we see that the composition $(\bcris^+\otimes_{\Qp}L)\langle p^{-n}T\rangle\ra (\bdR^+\otimes_{\Qp}L)\langle p^{-n}T\rangle\ra (\bdR^+\otimes_{\Qp}L)[[T]]$ is injective. Hence the natural map $(\bcris^+\otimes_{\Qp}L)\langle p^{-n}T\rangle\ra (\bdR^+\otimes_{\Qp}L)\langle p^{-n}T\rangle$ is injective. The proof of (ii) is similar.
\end{proof}
As a consequence of Lemma \ref{lastlemma}, we may naturally identify $S_n\ho_{\Qp}\bcris^+$ (resp. $S_{n,n'}\ho_{\Qp}\bcris^+$) as a subring of $S_n\ho_{\Qp}\bdR^+$ (resp. $S_{n,n'}\ho_{\Qp}\bdR^+$).
\begin{lemma} \label{keylemma}
For any $x\in S_n\ho_{\Qp}\bdR^+$, if its image in $S_{n, n'}\ho_{\Qp} \bdR^+$
belongs to  $S_{n, n'}\ho_{\Qp}\bcris^+$, then $x\in S_n\ho_{\Qp}\bcris^+$.
\end{lemma}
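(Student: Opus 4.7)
The plan is to reduce everything to comparison of coefficients in the formal Laurent series ring over $\bdR^+$. Via Lemma~\ref{frechet}, identify $S_n\ho_{\Qp}\bdR^+$ with $\bdR^+\langle p^{-n}T\rangle$ and $S_{n,n'}\ho_{\Qp}\bdR^+$ with $\bdR^+\langle p^{-n}T,p^{n'}T^{-1}\rangle$, and similarly for the $\bcris^+$ versions using Lemma~\ref{banach}. By construction (and Lemma~\ref{lastlemma}), all four rings embed canonically in $\bdR^+[[T,T^{-1}]]$ and the map sending $x\in\bdR^+\langle p^{-n}T\rangle$ to its image in $\bdR^+\langle p^{-n}T, p^{n'}T^{-1}\rangle$ is just the identity on coefficients.

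Write $x=\sum_{i\geq 0}a_iT^i$ with $a_i\in\bdR^+$. By hypothesis, the image of $x$ in $\bdR^+\langle p^{-n}T, p^{n'}T^{-1}\rangle$ coincides with some element $y=\sum_{i\in\ZZ}b_iT^i$ with $b_i\in\bcris^+$ satisfying $|b_i|_{\bcris^+}p^{-ni}\to 0$ as $i\to+\infty$ and $|b_i|_{\bcris^+}p^{-n'i}\to 0$ as $i\to-\infty$. Viewing the equality $x=y$ inside $\bdR^+[[T,T^{-1}]]$ and comparing coefficients, I get $a_i=b_i$ in $\bdR^+$ for $i\geq 0$ and $b_i=0$ in $\bdR^+$ for $i<0$. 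Since the inclusion $\bcris^+\hookrightarrow\bdR^+$ is injective, the latter gives $b_i=0$ in $\bcris^+$ for $i<0$.

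Consequently, $a_i=b_i\in\bcris^+$ for every $i\geq 0$, and the convergence condition $|b_i|_{\bcris^+}p^{-ni}\to 0$ as $i\to+\infty$ (which was available on the annulus because there is no negative part) is exactly the defining condition for the series $\sum_{i\geq 0}a_iT^i$ to lie in $\bcris^+\langle p^{-n}T\rangle$. Hence $x\in\bcris^+\langle p^{-n}T\rangle=S_n\ho_{\Qp}\bcris^+$, as desired.

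There is really no hard step here; the only thing to be careful about is making sure that the coefficient-comparison is legitimate, i.e.\ that both $x$ and $y$ can be unambiguously viewed inside the common formal Laurent series ring $\bdR^+[[T,T^{-1}]]$. This is precisely the content of Lemma~\ref{lastlemma} combined with the (trivial) inclusions $\bdR^+\langle p^{-n}T\rangle\subset\bdR^+[[T]]\subset\bdR^+[[T,T^{-1}]]$. Once this is recorded, the proof is a one-line coefficient comparison.
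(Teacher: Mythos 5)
Your proof is correct and takes essentially the same route as the paper: embed everything into $\bdR^+[[T,T^{-1}]]$ via Lemmas~\ref{banach}, \ref{frechet}, \ref{lastlemma} and observe that $\bdR^+[[T]]\cap\bcris^+\langle p^{-n}T,p^{n'}T^{-1}\rangle=\bcris^+\langle p^{-n}T\rangle$. You have simply unpacked that last intersection identity into an explicit coefficient comparison, which is exactly what justifies the paper's one-line display.
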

\begin{proof}
By the previous lemmas, we may regard all the rings involved as subsets of $(\bdR^+\otimes_{\Qp}L)[[T,T^{-1}]]$. It follows from the assumption that $x$ belongs to
\begin{equation*}
\begin{split}
&(\bdR^+\otimes_{\Qp}L)\langle p^{-n}T\rangle\cap(\bcris^+\otimes_{\Qp}L)\langle p^{-n}T, p^{n'}T^{-1}\rangle\\
\subseteq&(\bdR^+\otimes_{\Qp}L)[[T]]\cap(\bcris^+\otimes_{\Qp}L)\langle p^{-n}T, p^{n'}T^{-1}\rangle\\
=&(\bcris^+\otimes_{\Qp}L)\langle p^{-n}T\rangle,
\end{split}
\end{equation*}
yielding the desired result.
\end{proof}

\begin{cor}\label{cor:dR-vs-crys}
$\dcris^+(V^\ast_D)=\ddR^+(V^\ast_D)$.
\end{cor}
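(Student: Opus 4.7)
The plan is to reduce the equality to the scalar statement in Lemma \ref{keylemma}, using the annulus-by-annulus comparison from Corollary \ref{cor:comparison}. One inclusion is automatic: the $G_{\Qp}$-equivariant inclusion $\bcris^+\hookrightarrow\bdR^+$ and its compatibility with completed tensoring over $S$ give $\dcris^+(V^\ast_D)\subseteq\ddR^+(V^\ast_D)$, so only the reverse inclusion requires work.

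I would first exploit that $S=\Qp\langle T\rangle$ is a PID, so the locally free $S$-module $V^\ast_D$ is in fact free; fix a global $S$-basis $e_1,\ldots,e_d$, which remains a basis after base changing to any affinoid subdomain of $D$. Every $x\in\ddR^+(V^\ast_D)\subseteq (S\ho_{\Qp}\bdR^+)\otimes_S V^\ast_D$ then has a unique expansion $x=\sum_{i=1}^d x_i\otimes e_i$ with $x_i\in S\ho_{\Qp}\bdR^+$, and the task reduces to showing that each coefficient $x_i$ lies in the subring $S\ho_{\Qp}\bcris^+$.

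To do so, pick any $n'>0$ and restrict to the annulus $M(S_{0,n'})\subset D^\ast$. The image of $x$ there is still $G_{\Qp}$-invariant, hence lies in $\ddR^+(V^\ast_{S_{0,n'}})$, which by Corollary \ref{cor:comparison} is identified with $\dcris^+(V^\ast_{S_{0,n'}})^{\varphi=\alpha}\subseteq(S_{0,n'}\ho_{\Qp}\bcris^+)\otimes_{S_{0,n'}}V^\ast_{S_{0,n'}}$. Read in the fixed basis, this says $x_i|_{S_{0,n'}}\in S_{0,n'}\ho_{\Qp}\bcris^+$ for every $i$, so Lemma \ref{keylemma} with $n=0$ upgrades each $x_i$ to an element of $S\ho_{\Qp}\bcris^+$. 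Therefore $x\in(S\ho_{\Qp}\bcris^+)\otimes_S V^\ast_D$, and its $G_{\Qp}$-invariance then gives $x\in\dcris^+(V^\ast_D)$. The only genuine obstacle---the passage from bounded information over a punctured neighborhood to information extending across the origin---has already been isolated in Lemma \ref{keylemma}, so once $V^\ast_D$ is globally trivialized the rest of the argument is essentially formal.
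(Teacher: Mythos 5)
Your proof is correct and follows the same route as the paper: restrict to an annulus, invoke Corollary \ref{cor:comparison} to land in crystalline periods there, and use Lemma \ref{keylemma} to extend across the puncture. You usefully make explicit the step the paper leaves implicit, namely trivializing $V^\ast_D$ (via $S=\Qp\langle T\rangle$ being a PID) so that the scalar statement of Lemma \ref{keylemma} can be applied coordinate by coordinate.
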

\begin{proof}
Pick an  $S$-basis $e_1, e_2$ of $V_D^\ast$. By Corollary \ref{cor:comparison}, we get $\dcris^+(V^\ast_{0,1})=\ddR^+(V^\ast_{0,1})$. This implies that for any $a=a_1e_1+a_2e_2\in \ddR^+(V^\ast_D)$, the images of $a_1, a_2$ in $S_{0, 1}\ho_{\Qp} \bdR^+$
belong to  $S_{0, 1}\ho_{\Qp}\bcris^+$.
Applying the previous lemma, we thus obtain that
$a_1, a_2$ belong to $S\ho_{\Qp}\bcris^+$; this yields $\ddR^+(V^\ast_D)\subseteq\dcris^+(V^\ast_D)$. Hence $\ddR^+(V^\ast_D)=\dcris^+(V^\ast_D)$.
\end{proof}


\section{Proof of Theorem \ref{main}}

Applying Proposition~\ref{basechange} to $S\rightarrow S_{0,1}$, we obtain isomorphism
$\ddR^+(V^\ast_D)\otimes_{S}S_{0,1}\xrightarrow[]{\sim}\ddR^+(V^\ast_{0,1})$.
By Corollary \ref{cor:comparison}, $\ddR^+(V^\ast_{0,1})$ is a locally free $S_{0,1}$-module of rank 1. In particular, this implies $\ddR^+(V^\ast_D)\neq 0$. Now pick a nonzero element $e\in \ddR^+(V^\ast_D)$. By dividing a suitable power of $T$, we may assume that the specialization $e_{0}$ of $e$ at the puncture $0$ is nonzero. Note that $e\in\dcris^+(V^\ast_D)$ by Corollary \ref{cor:dR-vs-crys}. Moreover, the image of $e$ in $\dcris^+(V^\ast_{0,1})$ belongs to $\dcris^+(V^\ast_{0,1})^{\varphi=\alpha}$ by Corollary \ref{cor:comparison}. That is, $\varphi(e)=\alpha e$ on $M(S_{0,1})$. Following the construction of $\mathcal{C}_{p,N}$ given in \cite{Buz07}, we see that the Fredholm determinant of the compact operator $U_p$ has coefficients in $\Lambda_N=\mathbb{Z}_p[[(\mathbb{Z}/pN)^\times]]$. This implies that the norms of the $U_p$-eigenvalues of overconvergent eigenforms are always less than or equal to 1. Therefore, $\alpha$ extends to an element in $\mathcal{O}(D)$ by Lemma \ref{lem:extension}. Since $M(S_{0,1})$ is Zariski dense in $D$, we must have $\varphi(e)=\alpha e$ on the entire disk. In particular, $\varphi(e_0)=\alpha(0) e_0$. Using that fact that $\varphi$ is injective on $\dcris^+(V^\ast_0)$, we conclude that $\alpha(0)\neq 0$.
Thus $\alpha\in \mathcal{O}(D)^\times$. 

Similarly, for any prime factor $q$ of $N$, the Fredholm determinant of the compact operator $U_qU_p$ has coefficients in $\Lambda_N$. Thus the norms of the $U_qU_p$-eigenvalues of  overconvergent eigenforms are less than or equal to 1. Using Lemma \ref{lem:extension},  we deduce that the composition  
$u_q: D^\ast\ra\mathcal{C}_{p,N}\ra\mathbb{A}^1$,
where the last map sends $x$ to the product of its $U_q$ and $U_p$-eigenvalues, extends to a morphism $\tilde{u}_q$ on $D$.

Now we construct a morphism  $\tilde{h}:D\ra X_p\times \mathbb{G}_m\times\prod_{q |N} \mathbb{A}^1$ of rigid analytic spaces by sending $x$ to $(\tilde{u}(x), \alpha(x)^{-1}, \prod_{q |N}\alpha(x)^{-1}\tilde{u}_q(x))$, where $\tilde{u}$ is given by Proposition \ref{prop:extension}. It is clear that $\tilde{h}|_{D^\ast}=h$. Since $\mathcal{C}_{p,N}$ is an analytic subspace of $X_p\times \mathbb{G}_m\times\prod_{q |N} \mathbb{A}^1$, $\tilde{h}^{-1}(\mathcal{C}_{p,N})$ is an analytic subspace of $D$ containing $D^\ast$. This forces $\tilde{h}^{-1}(\mathcal{C}_{p,N})=D$, confirming that $\tilde{h}$ is the desired extension of $h$.


\end{document}